\documentclass[12pt]{article}
\usepackage[english]{babel}

\usepackage{upref,amsfonts,amsxtra,a4wide,latexsym,enumerate}
\usepackage{amssymb,a4wide,epsf,mathrsfs}
\usepackage{amsthm}
\usepackage{amsmath}
\usepackage{lastpage}
\usepackage[all]{xy}
\usepackage{cleveref}
\usepackage{pdfpages}
\usepackage{enumitem}
\usepackage{graphicx}
\usepackage{float}

\usepackage{tikz}

\newcommand{\Z}{{\mathbb Z}}
\newcommand{\N}{{\mathbb N}}

\renewcommand{\phi}{{\varphi}}

\theoremstyle{plain}
\newtheorem{theorem}{Theorem}
\newtheorem{proposition}[theorem]{Proposition}
\newtheorem{lemma}[theorem]{Lemma}
\newtheorem{corollary}[theorem]{Corollary}
\theoremstyle{definition}
\newtheorem{definition}[theorem]{Definition}

\newtheorem{conjecture}[theorem]{Conjecture}

\crefname{definition}{Definition}{Definitions}
\Crefname{definition}{Definition}{Definitions}
\crefname{theorem}{Theorem}{Theorems}
\Crefname{theorem}{Theorem}{Theorems}
\crefname{lemma}{Lemma}{Lemmas}
\Crefname{lemma}{Lemma}{Lemmas}
\crefname{corollary}{Corollary}{Corollaries}
\Crefname{corollary}{Corollary}{Corollaries}
\crefname{proposition}{Proposition}{Propositions}
\Crefname{proposition}{Proposition}{Propositions}
\crefname{problem}{Problem}{Problems}
\Crefname{problem}{Problem}{Problems}
\crefname{conjecture}{Conjecture}{Conjectures}
\Crefname{conjecture}{Conjecture}{Conjectures}


\begin{document}

\title{Group connectivity and group coloring: small groups versus large groups}

\author{Rikke Langhede and Carsten Thomassen \thanks{Supported by Independent Research Fund Denmark, 8021-002498 AlgoGraph.}\\Department of Applied Mathematics and Computer Science,
\\Technical University of Denmark, DK-2800 Lyngby, Denmark}

\maketitle

\begin{abstract}
  A well-known result of Tutte says that if $\Gamma$ is an Abelian group and $G$ is a graph having a nowhere-zero $\Gamma$-flow, then $G$ has a nowhere-zero $\Gamma'$-flow for each Abelian group $\Gamma'$ whose order is at least the order of $\Gamma$. Jaeger, Linial, Payan, and Tarsi observed that this does not extend to their more general concept of group connectivity. Motivated by this we define $g(k)$ as the least number such that, if $G$ is $\Gamma$-connected for some Abelian group $\Gamma$ of order $k$, then $G$ is also $\Gamma'$-connected for every Abelian group $\Gamma'$ of order $|\Gamma'| \geq g(k)$. We prove that $g(k)$ exists and satisfies for infinitely many $k$,

\begin{align*}
(2-o(1)) k < g(k) \leq 8k^3+1.
\end{align*}

The upper bound holds for all $k$. Analogously, we define $h(k)$ as the least number such that, if $G$ is $\Gamma$-colorable for some Abelian group $\Gamma$ of order $k$, then $G$ is also $\Gamma'$-colorable for every Abelian group $\Gamma'$ of order $|\Gamma'| \geq h(k)$. Then $h(k)$ exists and satisfies for infinitely many $k$,

\begin{align*}
(2-o(1)) k < h(k) < (2+o(1))k \ln(k).
\end{align*}

The upper bound (for all $k$) follows from a result of Kr\'al', Pangr\'ac, and Voss. The lower bound follows by duality from our lower bound on $g(k)$ as that bound is demonstrated by planar graphs.
\end{abstract}

\section{Introduction}

Tutte's 5-Flow Conjecture states that any 2-edge-connected graph has a nowhere-zero 5-flow (see e.g. \cite{bm, j3, jt, cq}). Seymour \cite{s} proved that every 2-edge-connected graph has a nowhere-zero 6-flow.

Jaeger et al. \cite{jlpt} introduced the concept of group-connectivity and proved that every 3-edge-connected graph is $\Gamma$-connected for any Abelian group $\Gamma$ of size $|\Gamma| \geq 6$. This extends the 6-flow theorem since that theorem is easily reduced to the 3-connected case as pointed out by Seymour \cite{s}.

Tutte \cite{to} (see also \cite{cq}) proved that if $\Gamma$ is an Abelian group and $G$ is a graph having a nowhere-zero $\Gamma$-flow, then $G$ has a nowhere-zero $\Gamma'$-flow for each Abelian group $\Gamma'$ whose order is at least the order of $\Gamma$. Jaeger et al. \cite{jlpt} observed that this does not extend to their more general concept of group connectivity. We prove that the statement becomes true if $\Gamma'$ is sufficiently large compared to $\Gamma$, as explained in the Abstract.

If $\Gamma$ is an Abelian group and $G$ is a planar 2-edge-connected graph, then $G$ is $\Gamma$-connected if and only if the dual graph $G$ is $\Gamma$-colorable. This duality was part of the motivation for these concepts \cite{jlpt}. This suggests the definition of $g(k)$, $h(k)$ in the Abstract, and we prove:

\begin{align}
(2-o(1)) k < g(k) \leq 8k^3+1,
\end{align}
and
\begin{align}
(2-o(1)) k < h(k) < (2+o(1))k \ln(k).
\end{align}

The lower bounds hold for infinitely many $k$, and we do not know if $g,h$ are monotone. The upper bounds hold for all $k$. The upper bound on $h(k)$ follows from a result of Kr\'al' et al. \cite{kpv}. The lower bound on $h(k)$ follows from the lower bound of $g(k)$ as that bound is demonstrated by planar graphs.

It was proved in \cite{li} that $g(3)=3$. We conjecture that $h(3)=3$ and prove that $h(3) \leq 5$.

The {\em group chromatic number} (see e.g. \cite{kpv, la}) $\chi_g(G)$ is the smallest number $k$ such that $G$ is $\Gamma$-colorable for every Abelian group $\Gamma$ of order at least $k$. We point out that there is another possible definition, namely the {\em weak group chromatic number} $\chi_{wg}(G)$ which is the smallest number $k$ such that $G$ is $\Gamma$-colorable for {\em some} Abelian group $\Gamma$ of order $k$. Clearly

\begin{align}
\chi(G) \leq \chi_{wg}(G) \leq \chi_g(G).
\end{align}

Thus our lower bound on $h(k)$ shows that $\chi_g(G)$ may be almost twice as big as $\chi_{wg}(G)$.

The {\em group connectivity number} (see e.g. \cite{lai}) $\Lambda_g(G)$ is the smallest number $k$ such that $G$ is $\Gamma$-connected for every Abelian group $\Gamma$ of order at least $k$.
Similarly, we define the {\em weak group connectivity number} $\Lambda_{wg}(G)$ which is the smallest number $k$ such that $G$ is $\Gamma$-connected for {\em some} Abelian group $\Gamma$ of order $k$.

Our lower bound on $g(k)$ shows that $\Lambda_g(G)$ may be almost twice as big as $\Lambda_{wg}(G)$.

\section{Group connectivity}

We use essentially the terminology and notation in \cite{bm}. We allow a graph to have multiple edges but no loops. If $v$ is a vertex in a directed graph, then $E^+(v)$ (respectively $E^-(v)$) denotes the set of edges going out from $v$ (respectively going into $v$).
Jaeger et al. \cite{jlpt} introduced the concept of group connectivity as follows.

\begin{definition} \label{def:delta}
Let $\Gamma$ be an Abelian group. The graph $G$ is said to be {\em $\Gamma$-connected} if the following holds: Given some orientation $D$ of $G$ and any function $\beta: V(G) \to \Gamma$ satisfying
$\sum_{v \in V(G)} \beta(v) = 0$,
there exists a function $f: E(G) \to \Gamma$ such that $\sum_{e \in E^+(v)} f(e) - \sum_{e \in E^-(v)} f(e) = \beta(v)$ for all $v \in V(G)$ and such that $f(e) \neq 0$ for all $e \in E(G)$.
\end{definition}

Note that the direction of an edge is not important. Indeed, we may replace "some orientation" by "any orientation" in the definition because we may replace $f(e)$ by $-f(e)$, if necessary.

Jaeger et al. \cite{jlpt} gave the following criterion for group connectivity in terms of forbidden flow values.

\begin{theorem} \label{thm:lower}
The graph $G$ is $\Gamma$-connected if and only if the following holds: Given any orientation $D$ of $G$ and any function $\varphi: E(G) \to \Gamma$, there exists a function $f: E(G) \to \Gamma$ which has $f(e) \neq \varphi(e)$ for all $e \in E(G)$ and $\sum_{e \in E^+(v)} f(e) - \sum_{e \in E^-(v)} f(e) = 0$ for all $v \in V(G)$.
\end{theorem}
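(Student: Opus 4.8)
The plan is to establish the two directions by a change-of-variables argument that converts between the two formulations. The key observation is that both statements concern the existence of a $\Gamma$-valued function on $E(G)$ avoiding a prescribed ``forbidden'' value on each edge; in Definition \ref{def:delta} the boundary map is prescribed to equal $\beta$, while in the theorem it is prescribed to be $0$ but the forbidden values $\varphi(e)$ are arbitrary. So the bridge between them is the map $f \mapsto f - \varphi$, which shifts the forbidden value $\varphi(e)$ to $0$ and shifts the boundary $\sum_{e\in E^+(v)} f(e) - \sum_{e\in E^-(v)} f(e)$ by a fixed amount $\beta_\varphi(v) := \sum_{e\in E^+(v)} \varphi(e) - \sum_{e\in E^-(v)} \varphi(e)$.

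First I would prove the ``only if'' direction. Assume $G$ is $\Gamma$-connected, and let an orientation $D$ and a function $\varphi: E(G) \to \Gamma$ be given. Define $\beta(v) = -\beta_\varphi(v)$ as above; I would check that $\sum_{v} \beta(v) = 0$, which holds because each edge contributes $\varphi(e)$ at its head and $-\varphi(e)$ at its tail, so the total telescopes to $0$. By $\Gamma$-connectivity applied with this orientation and this $\beta$, there is $g: E(G) \to \Gamma$ with $g(e) \neq 0$ everywhere and boundary equal to $\beta$. Then set $f = g + \varphi$: its boundary at $v$ is $\beta(v) + \beta_\varphi(v) = 0$, and $f(e) = \varphi(e)$ would force $g(e) = 0$, contradiction, so $f(e) \neq \varphi(e)$ everywhere. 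This is exactly what the theorem's condition requires.

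Conversely, assume the forbidden-value condition holds, and let $D$ and $\beta: V(G) \to \Gamma$ with $\sum_v \beta(v) = 0$ be given. I want to produce a nowhere-zero $f$ with prescribed boundary $\beta$. The idea is to first fix \emph{any} function $h: E(G) \to \Gamma$ whose boundary is $\beta$; such an $h$ exists precisely because $\sum_v \beta(v) = 0$ (one can route the demand along a spanning tree, or invoke the standard surjectivity of the boundary map onto the ``zero-sum'' subgroup of $\Gamma^{V(G)}$ — I would include a short lemma for this). Now apply the theorem's condition with the \emph{same} orientation $D$ and with $\varphi := -h$: this yields $g: E(G) \to \Gamma$ with boundary $0$ and $g(e) \neq -h(e)$ for all $e$. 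Setting $f = g + h$, the boundary of $f$ is $0 + \beta = \beta$, and $f(e) = 0$ would mean $g(e) = -h(e)$, which is excluded; hence $f$ is nowhere zero with the desired boundary, so $G$ is $\Gamma$-connected.

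The only genuinely non-bookkeeping step is the existence of \emph{some} edge function realizing a given zero-sum vertex function $\beta$ as its boundary; everything else is the substitution $f \leftrightarrow f \pm \varphi$ and a telescoping sum. I would dispatch that step by induction on the number of vertices with $\beta(v) \neq 0$: if all are zero take $f \equiv 0$; otherwise pick two vertices $u,w$ with $\beta(u),\beta(w)\neq 0$ (they exist since the values sum to $0$, so there cannot be exactly one nonzero value unless $G$ is a single vertex, a trivial case), take a path $P$ from $u$ to $w$, push $\beta(u)$ along $P$ (adding or subtracting at each edge according to its orientation relative to the direction of traversal), and recurse on the modified $\beta$ which now has fewer nonzero values — here one should note the graph is connected without loss of generality, since $\Gamma$-connectivity and the forbidden-value property are both ``per-block'' conditions. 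I expect the main obstacle, such as it is, to be stating this auxiliary fact cleanly and handling the orientation signs along the path correctly; the rest of the argument is a direct translation.
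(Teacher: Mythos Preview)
The paper does not supply its own proof of this theorem: it is quoted as a result of Jaeger, Linial, Payan, and Tarsi \cite{jlpt}, with no argument given. Your proposal is correct and is precisely the standard change-of-variables proof (the substitution $f \leftrightarrow f \pm \varphi$) used in \cite{jlpt}; the only additional ingredient, the existence of some $h$ with prescribed zero-sum boundary, is handled by your path-pushing induction and indeed requires connectedness, which you note. One small caution on that reduction: $\Gamma$-connectivity as in Definition~\ref{def:delta} is \emph{not} literally a per-component property (a disconnected graph with at least two components is never $\Gamma$-connected for $|\Gamma|\ge 2$, since one can place cancelling nonzero $\beta$-values in different components), whereas the forbidden-flow condition \emph{is} per-component; so the equivalence is really a statement about connected graphs, and your reduction should read ``connected'' rather than ``per-block''.
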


The function $f$ in Theorem~\ref{thm:lower} is called a {\em flow}, and $f(e)$ is called a {\em flow value}. $\varphi(e)$ is called a {\em forbidden flow value}.

\section{A lower bound for $g$}

We shall use the following lemma which is an easy exercise. A proof can be found in \cite{jlpt}.

\begin{lemma} \label{lemma:subset}
Let $P$ be a cyclic group of prime order, let $S$ be a non-empty proper subset of $P$, and let $T$ be a subset of $P$ which contains at least two elements. Then $|S+T| > |S|$.
\end{lemma}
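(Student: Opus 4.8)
The plan is to argue by contradiction, using the fact that the only subgroups of a group $P$ of prime order are $\{0\}$ and $P$ itself. First I would record the trivial lower bound $|S+T| \geq |S|$: fixing any $t \in T$, the translate $S+t$ is contained in $S+T$ and has the same cardinality as $S$. So it suffices to exclude the possibility $|S+T| = |S|$.

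Assume $|S+T| = |S|$. Then for every $t \in T$ we have $S+t \subseteq S+T$ with $|S+t| = |S| = |S+T|$, and since these are finite sets this forces $S+t = S+T$. As $|T| \geq 2$, choose distinct $t_1, t_2 \in T$ and set $d := t_2 - t_1 \neq 0$. From $S+t_1 = S+T = S+t_2$ we get $S + d = S$, i.e. $S$ is invariant under translation by $d$.

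Now I would iterate this invariance: $S$ is invariant under translation by every integer multiple of $d$. Since $|P|$ is prime and $d \neq 0$, the element $d$ generates $P$, so $S$ is invariant under translation by every element of $P$. Hence $S$ is either empty or all of $P$, contradicting the assumption that $S$ is a non-empty proper subset of $P$. This contradiction gives $|S+T| > |S|$.

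The argument is short, and there is no real obstacle; the one point to get right is the observation that equality in $|S+T| \geq |S|$ forces all the translates $S+t$, for $t \in T$, to coincide, after which primality of $|P|$ finishes the job. (Alternatively one could invoke the Cauchy--Davenport theorem, which gives $|S+T| \geq \min(|P|, |S|+|T|-1) \geq \min(|P|, |S|+1) > |S|$ using $|S| < |P|$, but the direct argument keeps this section self-contained, matching the paper's claim that the lemma is an easy exercise.)
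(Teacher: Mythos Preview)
Your argument is correct: the contradiction via translation-invariance by a nonzero element $d$, and hence by all of $P$ since $|P|$ is prime, is exactly the right idea. The paper itself does not give a proof of this lemma---it merely calls it an easy exercise and refers the reader to \cite{jlpt}---so your self-contained argument fits well here (and, as you note, Cauchy--Davenport would also do the job).
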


Let $q,k$ be natural numbers. We define $G_{q,k}$ as the graph consisting of two vertices $s,t$ connected by $q$ internally disjoint paths of length $k$.

\begin{theorem} \label{thm:prime}
Let $k$ be a natural number, and let $p$ be the smallest prime $> 2^{k-1}+1$. Then  $G_{q,2^{k-1}}$ is not $\Z_2^k$-connected for any odd $q$. $G_{q,2^{k-1}}$ is $\Z_p$-connected when $q \geq p$.
\end{theorem}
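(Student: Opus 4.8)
The plan is to exploit the very simple flow structure of $G_{q,2^{k-1}}$. Orient every one of the $q$ paths from $s$ to $t$; by the remark following Definition~\ref{def:delta} the choice of orientation is immaterial, so this is no loss. Every internal vertex of a path has degree $2$, so any flow in the sense of Theorem~\ref{thm:lower} (all divergences $0$) is constant along each path. Hence a flow on $G_{q,2^{k-1}}$ over an Abelian group $\Gamma$ is nothing but a tuple $(x_1,\dots,x_q)\in\Gamma^q$ with $x_1+\dots+x_q=0$ (conservation at $s$, equivalently at $t$); and if $\varphi$ is a prescribed forbidden-value function, then the edges of the $i$-th path carry a set $S_i\subseteq\Gamma$ of forbidden values with $|S_i|\le 2^{k-1}$, and ``$f$ avoids $\varphi$'' means exactly $x_i\in\Gamma\setminus S_i$ for each $i$. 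So by Theorem~\ref{thm:lower}, $G_{q,2^{k-1}}$ is $\Gamma$-connected precisely when: for every choice of sets $S_1,\dots,S_q\subseteq\Gamma$ with $|S_i|\le 2^{k-1}$ there exist $x_i\in\Gamma\setminus S_i$ with $\sum_i x_i=0$.

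For the first assertion, let $\Gamma=\Z_2^k$ and pick a subgroup $H$ of index $2$, so $|H|=2^{k-1}$, which equals the number of edges of each path; hence we may take $S_i=H$ for every $i$, forcing each $x_i$ into the non-identity coset $H+a$. Since every element of $\Z_2^k$ has order dividing $2$ and $q$ is odd, $\sum_i x_i = qa + (\text{element of }H) = a + (\text{element of }H)\in H+a$, which is never $0$. Thus no valid flow exists and $G_{q,2^{k-1}}$ is not $\Z_2^k$-connected.

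For the second assertion, let $\Gamma=\Z_p$ with $p$ the smallest prime exceeding $2^{k-1}+1$, so $p\ge 2^{k-1}+2$. Given sets $S_1,\dots,S_q$ with $|S_i|\le 2^{k-1}$, put $T_i=\Z_p\setminus S_i$; then $|T_i|\ge p-2^{k-1}\ge 2$. I claim $T_1+\dots+T_q=\Z_p$, which in particular produces $x_i\in T_i$ with $\sum_i x_i=0$. Indeed, as long as a partial sumset $T_1+\dots+T_j$ is a non-empty proper subset of $\Z_p$, Lemma~\ref{lemma:subset} (with the prime $p$, the proper subset $T_1+\dots+T_j$, and the $\ge 2$-element set $T_{j+1}$) gives $|T_1+\dots+T_{j+1}|>|T_1+\dots+T_j|$; since the size starts at $\ge 2$ and cannot exceed $p$, after incorporating at most $p-1\le q$ of the $T_i$ the sumset is already all of $\Z_p$, and adding the remaining (non-empty) $T_i$'s leaves it equal to $\Z_p$. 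Invoking Theorem~\ref{thm:lower} once more completes the proof.

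I expect no serious obstacle here: after the reduction to ``sumsets of complements of forbidden sets'', Lemma~\ref{lemma:subset} does all the work in the positive direction and a one-line order-$2$/parity computation settles the negative direction. The points that genuinely need care are the normalization to the all-$s\to t$ orientation together with the observation that a $0$-divergence flow is forced to be constant along each path, and the arithmetic ensuring that $p>2^{k-1}+1$ leaves each $T_i$ with at least two elements so Lemma~\ref{lemma:subset} applies; the gap between the hypothesis $q\ge p$ and the bound $q\ge p-1$ actually used causes no difficulty.
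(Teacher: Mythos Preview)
Your proof is correct and follows essentially the same route as the paper's. The only cosmetic differences are that the paper phrases the forbidden set for the negative part concretely as ``the elements of $\Z_2^k$ with an even number of $1$'s'' (which is of course an index-$2$ subgroup, so this is your $H$), and in the positive part the paper records $p-2^{k-1}\ge 3$ rather than your sharper $\ge 2$; since Lemma~\ref{lemma:subset} only needs two elements, your version is the correct one and the argument is otherwise identical.
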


\begin{proof}
We first prove that $G=G_{q,2^{k-1}}$ is not $\Z_2^k$-connected.
By reversing directions, if necessary, we may assume that all edges are directed towards $t$.
Let $\varphi: E(G) \to \Z_2^k$ be a function such that each path forbids all the $2^{k-1}$ elements having an even number of 1's. As the flow values on a directed path has to be the same on all edges of the path, the forbidden values imply that the flow value of any of the paths must have an odd number of 1's.
Since $G$ consists of an odd number of paths which all have a flow with an odd number of 1's, the in-flow in $t$ (or, similarly, out-flow in $s$) can never sum to $0$. Thus, $G$ is not $\Z_2^k$-connected.

To prove that $G$ is $\Z_p$-connected we consider any function $\varphi: E(G) \to \Z_p$, and again, we let all edges be directed towards $t$.
The goal is to find a flow $f: E(G) \to \Z_p$ such that $f(e) \neq \varphi(e)$ for all $e \in E(G)$.
As noted above $f$ must have the same value on all edges of a path between $s$ and $t$, and as each path has length $2^{k-1}$, it has at least $p - 2^{k-1} \geq 3$ possible flow values of $\Z_p$.
It remains to check that we can choose the values of $f$ on each path such that the sum is 0 in $s$ and $t$. Given $q \geq p$ subsets of $\Z_p$ of size at least 2, it follows from Lemma~\ref{lemma:subset} that the sum of these contains all elements of $\Z_p$, in particular the sum contains 0.
Thus we can choose $f$ such that $f(e) \neq \varphi(e)$ for all $e \in E(G)$ and the sum in $s$ and $t$ is $0$.
\end{proof}

\begin{corollary} \label{cor:infinite}
Given any $\epsilon > 0$ there exists an infinite number of graphs which are $\Gamma$-connected for some group $\Gamma$ of prime order, but not $\Gamma'$-connected for some group $\Gamma'$ satisfying $|\Gamma'| = (2-\epsilon)|\Gamma|$.
Hence $g(k) > (2-o(1)) k$ for infinitely many primes $k$.
\end{corollary}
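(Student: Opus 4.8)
The plan is to read off \Cref{thm:prime} for large parameter values and combine it with a standard fact about gaps between primes. Fix $\epsilon > 0$. For a natural number $n$, let $p = p(n)$ be the smallest prime larger than $2^{n-1}+1$; note $p \ge 3$ is odd. Applying \Cref{thm:prime} with $q = p$ (which is odd and satisfies $q \ge p$) produces a single graph $G_{p,2^{n-1}}$ that is $\Z_p$-connected but not $\Z_2^n$-connected. So I would take $\Gamma = \Z_p$, a group of prime order, and $\Gamma' = \Z_2^n$; then $G_{p,2^{n-1}}$ is $\Gamma$-connected but not $\Gamma'$-connected, with $|\Gamma| = p$ and $|\Gamma'| = 2^n$.

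The quantitative point is to show that $|\Gamma'|$ can be made as close to $2|\Gamma|$ as we like. The inequality $|\Gamma'| < 2|\Gamma|$ is immediate from $p > 2^{n-1}+1 > 2^{n-1}$. For the matching lower estimate I would invoke the Prime Number Theorem in the weak form that for every $\delta > 0$ the interval $\bigl(x,(1+\delta)x\bigr)$ contains a prime once $x$ is sufficiently large; this gives $p(n) \le (1+\delta)2^{n-1}$ for all large $n$, hence $p(n)/2^{n-1} \to 1$ and $2^n/p(n) \to 2$ as $n \to \infty$. Consequently, for all sufficiently large $n$ we have $|\Gamma'| = 2^n > (2-\epsilon)\,p = (2-\epsilon)|\Gamma|$. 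Since distinct values of $n$ evidently give non-isomorphic graphs $G_{p(n),2^{n-1}}$ (their edge-counts $p(n)2^{n-1}$ are strictly increasing), this yields infinitely many graphs with the required property, which is the first assertion of the corollary.

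For the bound on $g$, I would set $k = p(n)$ with $n$ chosen large as above; $k$ is prime, so $\Z_k$ is the only Abelian group of order $k$. The graph $G_{p(n),2^{n-1}}$ is $\Z_k$-connected, yet it is not $\Z_2^n$-connected and $|\Z_2^n| = 2^n$; hence $g(k)$ cannot be $\le 2^n$ (otherwise $\Z_k$-connectedness would force $\Z_2^n$-connectedness), i.e. $g(k) > 2^n > (2-\epsilon)k$. As $\epsilon$ was arbitrary and the primes $p(n)$ form an infinite set — they lie in the pairwise disjoint intervals $(2^{n-1}+1,\,2^n)$ by Bertrand's postulate — this gives $g(k) > (2-o(1))k$ along this infinite set of primes $k$.

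I do not expect a genuine obstacle: the corollary is essentially a repackaging of \Cref{thm:prime}, and the only external ingredient is the prime-gap estimate $p(n) = (1+o(1))2^{n-1}$, for which the Prime Number Theorem — or any of the much stronger "primes in short intervals" results — is more than enough. The one thing to watch is the quantifier structure: for each fixed $\epsilon$ the bound $|\Gamma'| > (2-\epsilon)|\Gamma|$ holds only for $n$ beyond some threshold, and the witnessing group $\Gamma'$ is forced to be $\Z_2^n$ rather than an arbitrary group of order roughly $2k$; but keeping track of this is routine.
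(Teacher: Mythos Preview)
Your argument is correct and follows essentially the same route as the paper: invoke \Cref{thm:prime} to produce graphs that are $\Z_p$-connected but not $\Z_2^n$-connected, and use a prime-gap consequence of the Prime Number Theorem to force $2^n/p \to 2$. If anything, your parametrization is tidier: by taking $p(n)$ to be exactly the prime named in \Cref{thm:prime} you sidestep the Fermat-prime caveat the paper has to append. One small quibble: Bertrand's postulate only guarantees a prime in $(2^{n-1},2^n)$, not in $(2^{n-1}+1,2^n)$, so your claim that the $p(n)$ lie in pairwise disjoint intervals $(2^{n-1}+1,2^n)$ is not quite justified by Bertrand alone; but you don't need it, since $p(n)>2^{n-1}\to\infty$ already forces infinitely many distinct values among the $p(n)$.
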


\begin{proof}
Let $p_n$ denote the $n$'th prime, and let $g_n=p_{n+1}-p_n$. The Prime Number Theorem implies that for any $\epsilon > 0$ there exists a natural number $N$ such that for any $n \geq N$, $g_n < \epsilon p_n$.
Now choose $n,k$ such that $N < p_n < 2^k$, and furthermore, $p_n$ is the largest prime $< 2^k$. Then
\begin{equation*}
p_{n+1} < 2^k + g_n < (1+\epsilon) 2^k.
\end{equation*}
Put $p=p_{n+1}$. If $q$ is any odd number $\geq p$ then, by Theorem~\ref{thm:prime}, $G_{q,2^k}$ is $\Z_p$-connected, but not $\Z_2^{k+1}$-connected.
It follows that $g(p) > 2(1-\epsilon)p$.
\end{proof}

There is a slight inaccuracy in the proof above, namely when $p=p_{n+1}=2^k+1$, that is, $p$ is a Fermat prime. But this can happen only if $k$ is a power of 2 which does not affect the correctness of Corollary~\ref{cor:infinite}.

\section{The cyclicity of a graph}

\begin{definition}
Let $G$ be a 2-edge-connected graph. We say that two edges $e_1,e_2$ are {\em cycle-equivalent} if every cycle that contains one of $e_1,e_2$ also contains the other. It is easy to see that this defines an equivalence relation on $E(G)$ and that two distinct edges $e_1,e_2$ are cycle-equivalent if and only if the two edges form a {\em 2-edge-cut}, that is, $G-e_1-e_2$ is disconnected.
We define the \textit{cyclicity} of $G$, denoted $q(G)$, to be the size of a largest equivalence class. In particular, if $G$ has no 2-edge-cuts (i.e. $G$ is 3-edge-connected), then $q(G) = 1$.
\end{definition}

The following result follows from Proposition 3.2 and Lemma 3.3 in \cite{l}. For the sake of completeness we include a proof.
 
\begin{proposition} \label{prop:q(G)}
If $G$ is $\Gamma$-connected, then $|\Gamma| > q(G)$.
\end{proposition}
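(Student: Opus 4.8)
The plan is to prove the contrapositive: if $|\Gamma| \le q(G)$ then $G$ is not $\Gamma$-connected, which I will verify through the forbidden-flow-value criterion of Theorem~\ref{thm:lower}. We may assume $G$ is $2$-edge-connected (otherwise $q(G)$ is not defined) and that $q := q(G) \ge 2$, since if $q \le 1$ the claim reduces to $|\Gamma| \ge 2$, which holds because a $\Gamma$-connected graph with an edge admits a nowhere-zero $\Gamma$-flow. Fix a cycle-equivalence class $C = \{e_1, \dots, e_q\}$ of maximum size $q$.

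The backbone is a structural claim: $G - C$ has exactly $q$ components $A_1, \dots, A_q$ and, after relabelling, each $e_i$ joins $A_i$ to $A_{i+1}$ with indices taken modulo $q$; equivalently, contracting each $A_j$ to a vertex turns $G$ into a cycle $C_q$ with edge set $C$. For the component count: $G - e_1$ is connected, while for every $i \ne 1$ the pair $\{e_1, e_i\}$ being a $2$-edge-cut makes $e_i$ a bridge of $G - e_1$, so deleting the $q-1$ bridges $e_2, \dots, e_q$ from the connected graph $G - e_1$ yields exactly $q$ components. No $e_i$ can have both ends in one $A_j$ (otherwise $e_i$ would not separate its ends in $G - e_1$, contradicting that it is a bridge there), so contracting the connected sets $A_1, \dots, A_q$ gives a loopless graph $H$ on $q$ vertices with exactly the $q$ edges of $C$. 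Contracting a connected subgraph of a $2$-edge-connected graph preserves $2$-edge-connectivity, so $H$ is $2$-edge-connected; a $2$-edge-connected graph with equally many vertices and edges has all degrees equal to $2$ and is connected, hence is a cycle $C_q$, which gives the claimed cyclic labelling.

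Now orient $e_i$ from $A_i$ to $A_{i+1}$ for each $i$ (indices mod $q$) and orient all other edges arbitrarily. Summing the conservation law $\sum_{e \in E^+(v)} f(e) - \sum_{e \in E^-(v)} f(e) = 0$ over the vertices of $A_j$ cancels every edge internal to $A_j$, and the only edges with exactly one end in $A_j$ are $e_{j-1}$ (entering $A_j$) and $e_j$ (leaving $A_j$), so the sum collapses to $f(e_j) - f(e_{j-1}) = 0$. Hence any $f : E(G) \to \Gamma$ satisfying conservation at all vertices has a common value $f(e_1) = \dots = f(e_q)$. Finally, since $q \ge |\Gamma|$, pick $\varphi : E(G) \to \Gamma$ so that $\{\varphi(e_1), \dots, \varphi(e_q)\}$ is all of $\Gamma$ (repeating elements if $q > |\Gamma|$), with $\varphi$ arbitrary elsewhere. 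Any $f$ forbidden by $\varphi$ would then have a common value $c := f(e_1) = \dots = f(e_q) \in \Gamma$ with $c \ne \varphi(e_i)$ for every $i$, i.e.\ $c \notin \Gamma$, which is impossible. Thus no such $f$ exists and, by Theorem~\ref{thm:lower}, $G$ is not $\Gamma$-connected.

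I expect the only genuine work to be the structural claim that a cycle-equivalence class together with the components of $G$ minus that class assembles into a cycle $C_q$; once that "cycle of blobs" picture is established, the conservation-plus-pigeonhole finish is routine. The minor points requiring care are that contracting connected subgraphs keeps the graph $2$-edge-connected and that no edge of $C$ becomes a loop, both of which are short.
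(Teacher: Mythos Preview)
Your proof is correct and follows the same overall strategy as the paper: argue the contrapositive via the forbidden-flow criterion (Theorem~\ref{thm:lower}), show that all edges of a maximum cycle-equivalence class must carry the same flow value, and then make $\varphi$ hit every element of $\Gamma$ on those edges. The paper reaches the equal-flow-value step more quickly: it simply picks a cycle $C$ containing one (hence all) of $e_1,\dots,e_{q(G)}$, orients them consistently along $C$, and invokes the standard fact that the two edges of any $2$-edge-cut carry the same flow value; your ``cycle of blobs'' decomposition is a correct but more elaborate way of deriving the same conclusion.
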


\begin{proof}
Suppose $G$ is $\Gamma$-connected. Let $e_1, \ldots, e_{q(G)}$ be the edges in a largest cycle-equivalence class. Let $C$ be a cycle which contains one, and hence all of $e_1, \ldots, e_{q(G)}$. By reversing directions, if necessary, we may assume that all edges in $e_1, \ldots, e_{q(G)}$ have the same direction when we traverse $C$. For any flow, all edges in $e_1, \ldots, e_{q(G)}$ have the same flow value. We now use Theorem \ref{thm:lower}.
If $|\Gamma| \leq q(G)$ we can define $\varphi: E(G) \to \Gamma$ such that it is surjective on $e_1, \ldots, e_{q(G)}$, that is, all elements in $\Gamma$ are forbidden on the edges $e_1, \ldots, e_{q(G)}$. Hence $|\Gamma| > q(G)$.
\end{proof}

We shall prove the following (see Theorem~\ref{thm:2-edge-conn}).

\begin{theorem} \label{thm:2-edge-conn-main}
If $G$ is 2-edge-connected, then $G$ is $\Gamma$-connected for any Abelian group $\Gamma$ of order $|\Gamma| > 8q(G)^3$.
\end{theorem}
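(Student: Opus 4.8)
The plan is to reduce from the general $2$-edge-connected case to the $3$-edge-connected case, where the Jaeger–Linial–Payan–Tarsi $6$-flow-type machinery applies, and to control the ``loss'' incurred by the reduction in terms of the cyclicity $q(G)$. Concretely, consider a $2$-edge-connected graph $G$ with cyclicity $q = q(G)$, and an Abelian group $\Gamma$ with $|\Gamma| > 8q^3$. Each cycle-equivalence class of size $\geq 2$ is a set of parallel ``serial'' edges forming a path (a sequence of $2$-edge-cuts), so contracting each such class down to a single edge produces a graph $G'$ in which every remaining edge lies in no $2$-edge-cut; thus $G'$ is $3$-edge-connected (after suppressing any degree-$2$ vertices). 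By Theorem~\ref{thm:lower} it suffices, given any forbidden-value function $\varphi\colon E(G)\to\Gamma$ and any fixed orientation, to produce a zero-sum flow $f$ avoiding $\varphi$ edgewise. Since all edges in one equivalence class carry a common flow value, the real constraint on a class of size $m$ is that the single common value must avoid an $m$-element forbidden set, i.e.\ it must lie in a set of size $\geq |\Gamma| - m \geq |\Gamma| - q$.

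The heart of the argument is therefore: in the $3$-edge-connected graph $G'$, find a nowhere-zero-type flow in which the value on each ``super-edge'' (the contracted class) avoids its prescribed forbidden set of size at most $q$, and the value on each ordinary edge avoids a single forbidden value. The standard route is the one from \cite{jlpt}: pick a spanning tree $T$ of $G'$, orient things conveniently, and assign flow values greedily to the non-tree edges one at a time; each non-tree edge, together with $T$, defines a fundamental cycle, and pushing flow along that cycle changes the values on the tree edges of the cycle. One argues, using Lemma~\ref{lemma:subset} (the Cauchy–Davenport-type sumset growth) applied in a cyclic group of prime order $p$ dividing $|\Gamma|$, that the set of achievable values on each edge grows fast enough that, as long as $|\Gamma|$ exceeds a fixed polynomial in $q$, every edge — ordinary or super-edge — can be kept off its forbidden set of size $\leq q$. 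Tracking the worst case through a fundamental-cycle basis, where each edge can be touched by several fundamental cycles, is exactly what forces a bound of the shape $|\Gamma| > cq^3$ rather than something linear in $q$: one factor of $q$ from the forbidden-set size, and further factors from the number of times the sumset argument must be iterated and from the ``width'' of the cycle structure around a super-edge.

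A cleaner way to organize the counting, and probably the one the authors intend (``see Theorem~\ref{thm:2-edge-conn}''), is to peel off the equivalence classes structurally: treat each size-$m$ class as a single edge $e$ that must receive a value outside a forbidden set $S_e$ with $|S_e| \leq q$, and prove a strengthening of group connectivity for $3$-edge-connected graphs where a bounded number of edges are allowed to forbid up to $q$ values each, at the cost of requiring $|\Gamma|$ to be polynomially large in $q$. This is the step I expect to be the main obstacle: one needs a version of the inductive/contraction proof of $\Gamma$-connectivity for $3$-edge-connected graphs that is robust enough to absorb multi-element forbidden sets, and keeping the group order requirement down to $O(q^3)$ means being careful that the forbidden sets do not compound multiplicatively at each induction step. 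Once that strengthened statement is in hand, the theorem follows by applying it to $G'$ with the super-edges carrying forbidden sets of size $\leq q$ and all other edges carrying singleton forbidden sets, then lifting the flow back to $G$ by assigning the common value to every edge in a contracted class; the zero-sum conditions at the suppressed degree-$2$ vertices are automatic, and $f(e)\neq\varphi(e)$ holds on every edge by construction.
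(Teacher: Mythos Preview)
Your high-level reduction is right and matches the paper: collapse each cycle-equivalence class to a single edge carrying a forbidden set of size at most $q=q(G)$, land in a $3$-edge-connected graph, and invoke a multi-forbidden-value version of $\Gamma$-connectivity there. (One small correction: a cycle-equivalence class need not form a path in $G$; its edges can be separated by nontrivial pieces of $G$. The paper spends a paragraph rearranging $G$ into a graph $G'$ that is literally a subdivision of a $3$-edge-connected $G''$, and you would need to do the same.)

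The real gap is in the $3$-edge-connected step. Your proposed engine---Lemma~\ref{lemma:subset} in a cyclic subgroup of prime order $p\mid |\Gamma|$---cannot work under the stated hypothesis, because $|\Gamma|>8q^3$ gives no lower bound on any prime divisor of $|\Gamma|$ (think $\Gamma=\Z_2^n$). The paper's mechanism is quite different and is the idea you are missing. It first produces, via Lemma~\ref{lemma:subgroup}, a set $\Pi\subseteq\Gamma$ of size $2q$ closed under inverses whose \emph{simple sum} $\Pi'$ has size at most $4q^2$; this works in any Abelian group. It then uses the \cite{jlpt} $2$-constructible spanning tree (Theorem~\ref{thm:specialspanningtree}) and a two-pass construction: in Step~1, flow values on tree edges are chosen to avoid not just their forbidden sets but the entire translate $F_e+\Pi'$ (this is where $8q^3 \ge |\Pi'|\cdot 2q$ is used), and in Step~2 each non-tree edge with a bad value is fixed by pushing an element of $\Pi$ around its fundamental cycle. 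Because the total correction on any tree edge lies in $\Pi'$, Step~2 cannot spoil Step~1. Your outline identifies that ``keeping the group order requirement down to $O(q^3)$ means being careful that the forbidden sets do not compound multiplicatively,'' but does not supply the device---the controlled correction set $\Pi$ with bounded simple sum---that actually achieves this.
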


We can now combine Proposition~\ref{prop:q(G)} and Theorem~\ref{thm:2-edge-conn-main} to get:

\begin{corollary}
If $G$ is $\Gamma$-connected for some Abelian group $\Gamma$, then $G$ is $\Gamma'$-connected for any Abelian group $\Gamma'$ of order $|\Gamma'| > 8 |\Gamma|^3$.
\end{corollary}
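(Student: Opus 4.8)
The plan is to derive the corollary by simply combining Proposition~\ref{prop:q(G)} with Theorem~\ref{thm:2-edge-conn-main}; the only preliminary observation needed is that a $\Gamma$-connected graph with an edge is automatically 2-edge-connected, so that Theorem~\ref{thm:2-edge-conn-main} is applicable.

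First I would record that if $G$ is $\Gamma$-connected and has at least one edge, then $G$ is 2-edge-connected. Indeed, suppose $e=uv$ were a bridge, and let $A$ be the vertex set of the component of $G-e$ containing $u$. Apply Definition~\ref{def:delta} with $\beta\equiv 0$ (which satisfies $\sum_{v\in V(G)}\beta(v)=0$): summing the equations $\sum_{e'\in E^+(x)}f(e')-\sum_{e'\in E^-(x)}f(e')=\beta(x)$ over all $x\in A$, every edge with both ends in $A$ cancels, so the left-hand side collapses to $\pm f(e)$, forcing $f(e)=0$ and contradicting the requirement $f(e)\neq 0$. A similar, even easier, argument rules out isolated vertices (choosing $\beta$ nonzero there), so $G$ is 2-edge-connected; the trivial one-vertex graph can be disposed of separately.

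Next, since $G$ is $\Gamma$-connected, Proposition~\ref{prop:q(G)} gives $q(G)<|\Gamma|$, and hence $8\,q(G)^3<8\,|\Gamma|^3$. Now let $\Gamma'$ be any Abelian group with $|\Gamma'|>8\,|\Gamma|^3$. Then $|\Gamma'|>8\,|\Gamma|^3>8\,q(G)^3$, and, $G$ being 2-edge-connected, Theorem~\ref{thm:2-edge-conn-main} applies directly and yields that $G$ is $\Gamma'$-connected.

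There is no real obstacle in this final step: all of the work is concentrated in Theorem~\ref{thm:2-edge-conn-main} (whose proof is given later), which is what produces the cubic growth in the bound. The only points that require any care here are the reduction to the 2-edge-connected case and the elementary monotonicity $q(G)<|\Gamma|\Rightarrow 8\,q(G)^3<8\,|\Gamma|^3<|\Gamma'|$, both of which are routine.
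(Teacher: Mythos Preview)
Your proof is correct and follows exactly the approach the paper indicates: combine Proposition~\ref{prop:q(G)} with Theorem~\ref{thm:2-edge-conn-main}. Your added verification that a $\Gamma$-connected graph (with an edge) is necessarily 2-edge-connected is a sensible gap-filling step, since the cyclicity $q(G)$ is only defined for 2-edge-connected graphs; the paper leaves this implicit.
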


\section{Flows in 3-edge-connected graphs with multiple forbidden flow-values}

We shall use the following definition and theorems by Jaeger et al. \cite{jlpt}:

\begin{definition} \label{defi:2-constructible}
A \textit{2-constructible} graph $G$ is defined recursively as follows.

(i) The graph with one vertex (and no edges) is 2-constructible.

(ii) If $G_1,G_2$ are 2-constructible, then the disjoint union of $G_1$ and $G_2$ together with two new edges joining them is 2-constructible.
\end{definition}

Jaeger et al. \cite{jlpt} proved the following.

\begin{theorem} \label{thm:specialspanningtree}
Let $G$ be a cubic 3-edge-connected graph and let $v$ be a vertex in $G$. Define $H = G - v$.
Then $H$ has a spanning tree $T$ such that the contraction of the edges of $H$ which are not in $T$ yields a 2-constructible graph.
\end{theorem}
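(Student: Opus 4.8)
The plan is to argue by induction on $n=|V(G)|$. For cubic $3$-edge-connected multigraphs the smallest examples are $\Theta$ (two vertices joined by a triple edge), where $H=G-v$ is a single vertex and hence $2$-constructible by rule~(i) of \Cref{defi:2-constructible}, and $K_4$, where $H=G-v$ is a triangle: take $T$ to be a spanning path of the triangle, contract the remaining edge, and obtain two vertices joined by a double edge, which is $2$-constructible by one application of rule~(ii). (For $n\ge 4$ one checks that $G$ is necessarily simple, since a multi-edge would create a $2$-edge-cut, so no degenerate cases arise.)

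\textbf{Inductive step.} For $n\ge 6$ I would invoke the standard generation theorem for cubic $3$-edge-connected graphs: $G$ is obtained from a cubic $3$-edge-connected graph $G'$ on $n-2$ vertices by choosing two edges $e_1=x_1y_1$ and $e_2=x_2y_2$ of $G'$, subdividing $e_i$ with a new vertex $u_i$, and adding the edge $u_1u_2$; equivalently, some edge $u_1u_2$ of $G$ is \emph{reducible}, meaning that deleting $u_1,u_2$ and joining their remaining neighbours yields $G'$ (and $G'$ is automatically $3$-edge-connected where it matters, since contracting a connected set never lowers edge-connectivity). I would pick the reducible edge so that $v\notin\{u_1,u_2\}\cup\{x_1,y_1,x_2,y_2\}$; this is possible unless $v$ lies in one of a bounded number of local configurations, which I would handle separately. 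In the main case $v\in V(G')$ is untouched by the operation; apply the inductive hypothesis to $(G',v)$ to obtain a spanning tree $T'$ of $H'=G'-v$ with $K':=H'/\overline{T'}$ a $2$-constructible graph in which $e_1,e_2$ appear (when they lie in $T'$). Build $T$ from $T'$: if $e_i\in T'$, put both edges of the subdivision of $e_i$ into $T$; if $e_i\in\overline{T'}$, put both into $\overline T$ (this merely enlarges one $\overline{T'}$-component by the single vertex $u_i$); finally assign $u_1u_2$ to $T$ or $\overline T$ so that $T$ remains a spanning tree. One then verifies that $H/\overline T$ is $K'$ with the surviving $e_i$'s subdivided together with the extra edge $u_1u_2$, and that this is precisely the result of one further join-by-two-edges step on $K'$ (rule~(ii), with the one or two new vertices playing the role of $G_2$), so $H/\overline T$ is again $2$-constructible.

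\textbf{Main obstacle.} I expect essentially all the work to be in two points. First, for rule~(ii) to apply one needs each surviving $e_i$ to lie on a $2$-edge-cut of $K'$ both of whose sides are themselves $2$-constructible, with $u_1u_2$ respecting that cut; since $e_1,e_2$ are dictated by the reduction while $T'$ (hence $K'$ and its decomposition) comes from the inductive hypothesis, one must coordinate the two — most cleanly by strengthening the inductive statement so that it also outputs a compatible recursive $2$-edge-cut decomposition, and then choosing the reducible edge with that decomposition in mind, or conversely choosing $T'$ with the positions of $e_1,e_2$ in view. Second, the case $v\in\{u_1,u_2\}$ is genuinely different, since undoing the operation re-creates $v$; here one should re-select the reducible edge to avoid $v$ and its neighbourhood when the supply of reducible edges permits, and otherwise dispatch the few remaining local configurations around $v$ by hand. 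Everything else is routine bookkeeping: ensuring $\overline T$ stays a forest, that no tree edge becomes a loop in the quotient, and that the nested family of $2$-edge-cuts extends through the subdivisions and the new edge.
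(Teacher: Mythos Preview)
The paper does not prove this theorem: it is quoted verbatim from Jaeger, Linial, Payan, and Tarsi \cite{jlpt} and used as a black box in the proof of \Cref{thm:3-edge-connminusvertex}. There is therefore no in-paper argument to compare your proposal against.

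As for the proposal itself, it is explicitly a plan rather than a proof, and the obstacles you flag in your last paragraph are real and, as written, unresolved. One concrete failure: take the case $e_1,e_2\in T'$. Then $u_1u_2$ must go into $\overline T$ (otherwise $T$ contains a cycle), so $\overline T=\overline{T'}\cup\{u_1u_2\}$, and contracting $\overline T$ yields $K'$ with $e_1$ and $e_2$ subdivided at a \emph{single common} vertex $u$ of degree~$4$. Rule~(ii) of \Cref{defi:2-constructible} attaches a piece by exactly two edges, so a lone degree-$4$ vertex cannot serve as the $G_2$ of one application of~(ii); for the step to go through you would need $\{e_1,e_2\}$ already to be a $2$-edge-cut of $K'$ with both sides $2$-constructible, which is exactly the coordination problem you raise and do not settle. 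A second concrete failure: when both $e_1,e_2\in\overline{T'}$, your recipe (all four subdivided halves into $\overline T$, then $u_1u_2\in T$) leaves $\{u_1,u_2\}$ as a separate tree-component, so $T$ is not a spanning tree of $H$; repairing this forces at least one subdivided half into $T$, after which the quotient is no longer of the form ``$K'$ plus one small piece''. In short, the inductive step does not close without the strengthened hypothesis you allude to, and formulating and proving that strengthening is essentially the entire content of the theorem; the outline as given does not supply it.
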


They used it to prove the following.

\begin{theorem} \label{thm:jaeger}
Let $G$ be a 3-edge-connected graph and let $v$ be a vertex of degree 3 in $G$.
Then $G-v$ is $\Gamma$-connected for any Abelian group $\Gamma$ of order $|\Gamma| \geq 6$.
\end{theorem}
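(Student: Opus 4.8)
The plan is to follow Jaeger et al.: first reduce to the cubic case, and then feed the spanning tree of \Cref{thm:specialspanningtree} into an induction on the recursive structure of 2-constructible graphs.

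\emph{Reduction to cubic $G$.} If $w\neq v$ has degree $d\ge 4$, I would replace $w$ by a cycle $w_1w_2\cdots w_d$ and attach the $d$ edges formerly at $w$ one-to-one to $w_1,\dots,w_d$. This keeps the graph 3-edge-connected (an edge cut of size $\le 2$ of the new graph either projects to one of $G$ or lies inside an inserted cycle, where it separates nothing), makes all new vertices cubic, and leaves $\deg(v)$ unchanged. Iterating over all vertices of degree $\ge 4$ other than $v$ gives a cubic 3-edge-connected graph $G'$ with $v$ still of degree $3$, such that $G-v$ is obtained from $G'-v$ by contracting the inserted cycles. Since every graph obtained from a $\Gamma$-connected graph by contracting edges is again $\Gamma$-connected, it suffices to treat $G'$; so from now on $G$ is cubic and 3-edge-connected. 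Then $G$ is simple and $H:=G-v$ is 2-edge-connected with all degrees in $\{2,3\}$ (in particular $H$ is not a tree).

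\emph{The flow problem on $G^\ast$.} Apply \Cref{thm:specialspanningtree} to get a spanning tree $T$ of $H$ with $G^\ast:=H/(E(H)\setminus E(T))$ 2-constructible; writing $F:=E(H)\setminus E(T)$, an edge count (using $\deg_G(v)=3$ and that $G^\ast$ is loopless) shows $F$ is a forest, so each vertex of $G^\ast$ arises by collapsing a subtree ("blow-up tree") of $H$, while $E(G^\ast)=E(T)$. The crucial point is that a zero-boundary flow on $H$ is rigid along $F$: if $e=xy\in F$, then in any zero-boundary flow $f(e)$ equals, up to sign, the sum of the $f$-values on the remaining edges at $x$; iterating, zero-boundary flows on $H$ correspond bijectively to zero-boundary flows $f^\ast$ on $G^\ast$, with $f(e)=\sum_{g\in S_e}\pm f^\ast(g)$ for $e\in F$, where $S_e$ is a proper non-empty set of edges of $G^\ast$ incident with the vertex onto which $e$ collapses, and inside one blow-up tree the sets $S_e$ form a laminar family whose local structure is limited by $\deg_H\le 3$. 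By \Cref{thm:lower}, ``$H$ is $\Gamma$-connected'' is therefore equivalent to: for every $\varphi\colon E(H)\to\Gamma$ there is a zero-boundary flow $f^\ast$ on $G^\ast$ with $f^\ast(g)\neq\varphi(g)$ for all $g\in E(G^\ast)$ and $\sum_{g\in S_e}\pm f^\ast(g)\neq\varphi(e)$ for all $e\in F$.

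\emph{The induction.} I would prove this by induction on the 2-constructible construction of $G^\ast$, carried out in parallel with the matching decomposition $H=H_1\cup H_2+\{a,a'\}$ (two connector edges, $H_i/(F\cap E(H_i))=G^\ast_i$), strengthened to permit in addition one forbidden value on each connector edge and a bounded family of forbidden partial-sum values at the vertices where connectors attach. In the base case $G^\ast_i=K_1$, the piece $H_i$ is a blow-up tree with pendant legs on which every flow value is forced, so one only checks the few partial-sum constraints can be met. In the inductive step, the net flow across the cut $\{a,a'\}$ has a single free parameter $t\in\Gamma$; a short case analysis of how $a$ and $a'$ sit inside the two blow-up trees shows $t$ need avoid only a bounded number of values — the accounting gives at most five (the forbidden values on $a$ and $a'$, the forced partial-sum values that become relevant, and $0$ where a connector sits at a leaf) — after which the induced boundaries on $H_1$ and $H_2$ are absorbed by the inductive hypothesis for each admissible $t$. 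Since $|\Gamma|\ge 6$, a legal $t$ always exists, which is precisely where the constant $6$ enters.

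\emph{Main obstacle.} The real work is the inductive step: one must pin down the strengthened hypothesis so that it is (i) true in the base case of a blow-up tree with legs, (ii) stable under the combine operation — the forbidden connector values and the partial-sum constraints split correctly between $H_1$ and $H_2$ and never overflow the allowance when a connector attaches at an interior vertex of a blow-up tree — and (iii) sharp enough that the bookkeeping yields exactly five, giving the bound $6$. The cubic reduction, the rigidity along $F$, and the base case are all routine once the correct hypothesis is fixed.
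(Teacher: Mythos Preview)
The paper does not give its own proof of \Cref{thm:jaeger}; it is stated as a result of Jaeger, Linial, Payan, and Tarsi and cited to \cite{jlpt}. So there is no paper-proof to compare against directly. The paper does, however, prove the generalization \Cref{thm:3-edge-connminusvertex} and explicitly says its method ``is similar to the method in \cite{jlpt}''. That method is \emph{not} the induction you outline but a two-pass algorithm on the same spanning-tree structure: in Step~1 one walks through the 2-construction of $H/E_b$ in reverse order, at each stage sending flow along a cycle through the two new tree edges $e_i,e_i'$ so that their values are non-forbidden and remain so under any perturbation from a fixed small set $\Pi'$; in Step~2 one repairs the non-tree edges by adding flows along fundamental cycles with values in $\Pi$. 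No strengthened inductive hypothesis is ever formulated. Your reduction to the cubic case and your translation of the problem to constraints on a flow $f^\ast$ on $G^\ast$ (forbidden values on tree edges plus partial-sum constraints from the contracted forest $F$) are correct and match the paper's setup.

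Where your sketch has a genuine gap is precisely the part you flag as the ``main obstacle'': the strengthened inductive hypothesis is never actually stated. You describe what it must accommodate --- one forbidden value on each connector edge and a ``bounded family'' of partial-sum constraints at the attachment vertices --- and then assert that ``the accounting gives at most five'' forbidden values for the parameter $t$. But that count \emph{is} the proof: one must show that when a connector $a$ attaches at an interior vertex of a blow-up tree, the partial-sum constraints on both sides of that vertex that involve $t$, together with the forbidden values on $a,a'$ themselves and whatever constraints are carried over from the outer induction, never exceed five in total, and that what remains on each piece $H_i$ again fits the same hypothesis. Until the hypothesis is written down and this closure is checked, you have a plan, not a proof. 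The two-pass method used in \cite{jlpt} and in the paper sidesteps exactly this bookkeeping.
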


We shall also use the following definition and lemma.

\begin{definition}
Given a finite subset $\Pi = \{a_1, a_2, \ldots, a_k\}$ of an Abelian group $\Gamma$, we define the {\em simple sum} $\Pi'$ of $\Pi$ to be the set of all elements on the form
\begin{equation*}
\alpha_1 a_1 + \alpha_2 a_2 + \ldots + \alpha_k a_k
\end{equation*}
where $\alpha_i \in \{0, \pm 1\}$ for $1 \leq i \leq k$.
In particular, $\Pi'$ contains $0$.
\end{definition}

\begin{lemma} \label{lemma:subgroup}
Given a natural number $k$ and an Abelian group $\Gamma$ of order $|\Gamma| > k$, there exists a subset $\Pi \subseteq \Gamma$ which is closed under taking inverses and has $|\Pi| = k$ such that the simple sum $\Pi'$ of $\Pi$ satisfies $|\Pi'| \leq k^2$.
\end{lemma}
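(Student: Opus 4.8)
The plan is to build $\Pi$ one inverse-pair at a time, keeping track of the subgroup (or, more precisely, the subset of the group) generated by the simple sums so far, and to exploit the fact that as long as the simple sum $\Pi'$ of the current set $\Pi$ is not yet all of $\Gamma$, we can enlarge $\Pi$ by one pair $\{a,-a\}$ in a way that grows $|\Pi'|$ by at least one. Concretely, I would define a sequence of sets $\Pi_0 = \emptyset \subseteq \Pi_1 \subseteq \Pi_2 \subseteq \cdots$, where each $\Pi_{i+1}$ is obtained from $\Pi_i$ by adding an element $a_{i+1}$ together with its inverse $-a_{i+1}$ (if $a_{i+1}$ has order $2$, i.e. $a_{i+1} = -a_{i+1}$, then we add only one element, but this case only helps the count). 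The key observation is: if $\Pi_i'$ is a proper subset of $\Gamma$, then there exists $a_{i+1} \in \Gamma \setminus \Pi_i'$, and for this choice $\Pi_{i+1}' \supseteq \Pi_i' \cup \{a_{i+1}, -a_{i+1}\}$ since taking $\alpha_{i+1} \in \{0, \pm 1\}$ and all other coefficients giving $0$ already yields $\{0, a_{i+1}, -a_{i+1}\}$, and adding $\pm a_{i+1}$ to any element of $\Pi_i'$ keeps us in $\Pi_{i+1}'$. In particular $a_{i+1} \in \Pi_{i+1}' \setminus \Pi_i'$, so $|\Pi_{i+1}'| \geq |\Pi_i'| + 1$.

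Next I would combine this with a counting argument to control the size of $\Pi$ relative to $|\Pi'|$. Each step adds either one element (order-$2$ case) or two elements (an inverse pair) to $\Pi$, and increases $|\Pi'|$ by at least $1$. Hence after $m$ steps we have $|\Pi_m| \leq 2m$ while $|\Pi_m'| \geq m+1$ (the $+1$ coming from $0 \in \Pi_0'$). We continue the process as long as $|\Pi_i| < k$; since $|\Gamma| > k$, I must check that the process does not terminate prematurely because $\Pi_i'$ has already become all of $\Gamma$ — but if $\Pi_i' = \Gamma$ with $|\Pi_i| < k$, then $|\Pi_i'| = |\Gamma| > k > |\Pi_i|$, and we can simply pad $\Pi_i$ with additional inverse-closed elements (e.g. more pairs, which only keeps $\Pi'$ equal to all of $\Gamma$, trivially of size $|\Gamma| \le k^2$ once $|\Gamma| \le k$... wait, here $|\Gamma|$ could exceed $k^2$) — so the padding must be done carefully, choosing the extra elements from a fixed small set so that $\Pi'$ does not blow up. In fact the cleanest route is: run the process until $|\Pi| = k$ exactly (adjusting the last step to add a single element if parity requires), never caring whether $\Pi' = \Gamma$ or not; then at termination $|\Pi| = k$ and $|\Pi'| \leq \sum_{\text{steps}} (\text{increment})$. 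But the increments can be larger than $1$ in general, so I need an \emph{upper} bound on $|\Pi'|$, which is the reverse inequality.

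For the upper bound $|\Pi'| \leq k^2$, the argument is a direct count independent of how $\Pi$ was chosen: writing $\Pi = \{a_1, \dots, a_k\}$, every element of $\Pi'$ has the form $\sum \alpha_i a_i$ with $\alpha_i \in \{0,\pm1\}$, but since $\Pi$ is closed under inverses we may pair up $a_i$ with $-a_i$ and observe that $\alpha_i a_i + \alpha_j a_j$ with $a_j = -a_i$ ranges over at most the set $\{0, \pm a_i, \pm 2a_i\}$ — this is getting complicated, so instead I would argue as follows: partition a generating subset out of $\Pi$. Actually the slick version: among $a_1, \dots, a_k$ pick a maximal subset $\{b_1, \dots, b_r\}$ that is "independent" in the sense that each $b_{j+1} \notin \langle b_1, \dots, b_j\rangle$; then $\Pi'$ is contained in $\langle b_1, \dots, b_r \rangle$, and a crude bound on this subgroup's order via the $b_j$'s orders, together with $r \le k$, should yield $|\Pi'| \le k^2$. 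The main obstacle I anticipate is precisely this upper bound: making the bound $k^2$ (rather than something exponential like $2^k$ or $3^k$) come out requires using that $\Pi$ is \emph{inverse-closed} in an essential way — presumably one shows $\Pi'$ is actually a \emph{subgroup} or is contained in a subgroup generated by at most $\lfloor k/2 \rfloor$ (or so) elements each contributing a bounded multiplicative factor, and balancing "number of generators" against "order of each cyclic piece" gives a product bounded by $k^2$. I would therefore spend most of the effort identifying the right inverse-closed set $\Pi$ (likely: greedily pick elements so that $\Pi'$ grows by exactly one each time, i.e. always pick $a_{i+1} \in \Gamma \setminus \Pi_i'$ with $a_{i+1}$ adjacent to $\Pi_i'$ in a Cayley-graph sense, forcing $\Pi_{i+1}' = \Pi_i' \cup \{a_{i+1}, -a_{i+1}\}$ exactly), so that by construction $|\Pi'| \le 2 \cdot (k/2) + 1 \approx k + 1 \le k^2$, which is far stronger than needed and sidesteps the structural difficulty entirely.
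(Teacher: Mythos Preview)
Your proposal has a genuine gap: you never establish the upper bound $|\Pi'| \leq k^2$, and the greedy construction you describe cannot deliver it. When you enlarge $\Pi_i$ by a pair $\{a,-a\}$, the new simple sum is $\Pi_{i+1}' = \Pi_i' + \{0,\pm a,\pm 2a\}$, so $|\Pi_{i+1}'|$ can be as large as $5|\Pi_i'|$; after $k/2$ steps this is exponential in $k$, not quadratic. Your inequality $|\Pi_{i+1}'| \geq |\Pi_i'|+1$ is correct but points the wrong way. Your final suggestion --- choosing $a_{i+1}$ ``adjacent to $\Pi_i'$'' so that $\Pi_{i+1}' = \Pi_i' \cup \{a_{i+1},-a_{i+1}\}$ exactly --- is not achievable: as soon as $\Pi_i'$ contains some nonzero $b$, the set $\Pi_{i+1}'$ must also contain $b\pm a_{i+1}$, which are generically new. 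No choice of $a_{i+1}$ prevents this multiplicative blow-up, and the ``maximal independent subset'' idea likewise only bounds $|\langle b_1,\ldots,b_r\rangle|$ by a product of orders, which is exponential in $r$ in general.

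The paper sidesteps the growth problem entirely by confining $\Pi$ to a small \emph{subgroup}: if $\Pi\subseteq H\leq\Gamma$ then automatically $\Pi'\subseteq H$, so the task reduces to exhibiting a subgroup $H$ with $k\leq |H|<k^2$ and taking any inverse-closed $k$-subset of it. This is done by a two-line case split: if some $a\in\Gamma$ has order $\geq k$, take $\Pi$ to consist of the multiples $\pm a,\pm 2a,\ldots$ (so $\Pi'$ lies in a short arithmetic progression in $\langle a\rangle$); otherwise every element has order $<k$, one takes a subgroup $\Gamma_0$ of maximum order $<k$, adjoins any $a\notin\Gamma_0$, and gets $k\leq|\langle\Gamma_0,a\rangle|<k\cdot k=k^2$ by maximality of $\Gamma_0$ and the bound on the order of $a$. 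The idea you are missing is precisely this structural containment; trying to control $|\Pi'|$ incrementally does not work.
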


\begin{proof}
For $k=1$ we let $\Pi$ consist of $0$, so let $k>1$. If some element $a$ in $\Gamma$ has order at least $k$, then we put $\Pi = \{\pm a, \pm 2a, \ldots, \pm \frac{k}{2} a \}$ if $k$ is even, and $\Pi = \{0, \pm a, \pm 2a, \ldots, \pm \frac{k-1}{2} a \}$ if $k$ is odd. So assume all elements have order $<k$.

Let $\Gamma_0$ be the largest subgroup of order $<k$. Let $a \in \Gamma \setminus \Gamma_0$. Then the subgroup generated by $\Gamma_0 \cup \{a\}$ has order greater than $k$ but less than $k^2$ and any $k$-subset closed under taking inverses can play the role of $\Pi$.
\end{proof}

We use these results to prove the following:

\begin{theorem} \label{thm:3-edge-connminusvertex}
Let $G$ be a 3-edge-connected graph and let $v$ be a vertex of degree 3 in $G$. Define $H = G - v$. Assume each edge of $H$ has a direction.
Let $k \in \N$, and let $\Gamma$ be any Abelian group of order $|\Gamma| > 8k^3$. Assume that for each edge $e$, $F_e$ is a set of at most $k$ elements in $\Gamma$.
Then there exists a flow $f: E(H) \to \Gamma$ such that $f(e) \not\in F_e$ for all $e \in E(G)$.
\end{theorem}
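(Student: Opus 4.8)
The plan is to reduce to the cubic case of Theorem~\ref{thm:jaeger}, using Theorem~\ref{thm:specialspanningtree} as the structural tool and Lemma~\ref{lemma:subgroup} to control how the many forbidden values on parallel/degenerate edges accumulate. First I would use the fact that $v$ has degree~$3$ in the $3$-edge-connected graph $G$ to produce, via Theorem~\ref{thm:specialspanningtree} (after possibly reducing $G$ to a cubic $3$-edge-connected graph by splitting off higher-degree vertices — a standard reduction that preserves $3$-edge-connectivity and the degree-$3$ vertex $v$), a spanning tree $T$ of $H=G-v$ whose non-tree edges, when contracted, leave a $2$-constructible graph $G^*$. The $2$-constructible structure gives a recursive handle: $G^*$ is built from single vertices by repeatedly taking disjoint unions and adding two connecting edges, and a nowhere-``forbidden'' flow on $G^*$ can be constructed by induction along this recursion, since at each step one has two new edges carrying opposite flow contributions across the cut, giving one free group element to choose.

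The key point is that contracting the non-tree edges of $T$ identifies each edge of $G^*$ with a \emph{set} of original edges of $H$ — the tree path plus the non-tree edge closing it — and any flow on $G^*$ forces equal flow values along each such set (up to sign, depending on orientations). So a flow value assigned to an edge of $G^*$ must avoid the union of the forbidden sets $F_e$ over all original edges $e$ in that set; since each path could be long, this union could be as large as $k$ times the number of edges, which is far too big. This is where Lemma~\ref{lemma:subgroup} enters: rather than choosing arbitrary flow values, I would restrict all flow values to lie in a fixed symmetric subset $\Pi \subseteq \Gamma$ of size $2k$ (or $2k+1$) whose simple sum $\Pi'$ has size at most $(2k)^2 = 4k^2$, which is available because $|\Gamma| > 8k^3 > 2k$. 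Then at each recursive step the single free group element we need must avoid a bounded number of translates — the forbidden values $F_e$ (there are at most $k$ of them per edge, but crucially they interact with only \emph{two} edges at a time in the construction step, and the accumulated sum of previously chosen values lies in $\Pi'$) — and a counting argument shows $|\Pi|$ (or the number of available non-zero choices) exceeds the total number of excluded values $O(k^3) < 8k^3 < |\Gamma|$, so a valid choice always exists.

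The main obstacle I expect is bookkeeping the forbidden set at each step of the $2$-constructible recursion: when we glue $G_1$ and $G_2$ with two new edges $e, e'$, the flow across the cut is a single group element $x$, but $x$ (through the tree structure) may be forced onto a whole path of original edges in $H$, and moreover the flows already fixed inside $G_1$ and $G_2$ feed into the conservation equations at the endpoints, constraining $x$ further. Making precise that the ``already determined'' part always lies in $\Pi'$ (so contributes a \emph{bounded} number — at most $|\Pi'| \le 4k^2$ — of bad values when combined with the at-most-$k$ forbidden values on each of the finitely many affected edges), and that the total count stays below $8k^3$, is the delicate quantitative heart of the argument; the cubic reduction and the recursive flow construction itself are routine once the right subset $\Pi$ is fixed at the outset. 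I would set $|\Pi| = 2k$, track that every partial flow takes values in $\Pi \cup \{0\}$ so partial vertex-sums lie in $\Pi'$, and verify the inequality $2k > k + |\Pi'|/1 \cdot (\text{const})$ — more carefully, that the number of choices of $x \in \Pi \setminus\{0\}$ avoiding all constraints is positive — closes the induction.
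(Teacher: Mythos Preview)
Your proposal has the right ingredients (cubic reduction, the $2$-constructible tree from Theorem~\ref{thm:specialspanningtree}, Lemma~\ref{lemma:subgroup}) but the way you deploy $\Pi$ is inverted relative to what actually works, and this creates a genuine gap.

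First, a structural point: after contracting the non-tree (blue) edges, each edge of $G^*=H/E_b$ is a \emph{single} tree edge of $H$, not ``a tree path plus a non-tree edge''. So the difficulty is not that a $G^*$-edge carries a union of many forbidden sets; it is that the blue edges, which vanish in $G^*$, still carry forbidden sets that the final flow in $H$ must respect.

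Second, and more seriously, you propose to restrict \emph{all} flow values to $\Pi$ (of size $2k$) so that partial sums lie in $\Pi'$. But then at each recursive step you have at most $2k$ choices for the new value $x$, while the constraints you acknowledge involve $|\Pi'|\le 4k^2$ translates combined with $k$ forbidden values per edge --- on the order of $k^3$ bad values. The inequality you write at the end, ``$2k > k + |\Pi'|\cdot(\text{const})$'', is false, and there is no way to close the induction with only $2k$ choices.

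The paper's proof resolves this with a two-phase scheme in which the roles are reversed. In Step~1 one walks through the $2$-constructible decomposition and assigns to each pair of tree edges $e_i,e_i'$ a flow value $\gamma_i$ chosen from \emph{all of} $\Gamma$, subject to $(\gamma_i+\Pi')\cap(F_{e_i}\cup F_{e_i'})=\emptyset$; this is where the budget $|\Gamma|>8k^3\ge |\Pi'|\cdot 2k$ is spent. After Step~1 the tree edges are fine but blue edges may be forbidden. In Step~2 one fixes each bad blue edge $e$ by adding a flow of value $\pm\gamma$ (some $\gamma\in\Pi$ good for $e$, which exists since $|\Pi|=2k>|F_e|$ in the paired sense) along its fundamental cycle, processing all blue edges sharing the same good $\gamma$ simultaneously. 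Because each $\gamma\in\Pi$ is used in at most one such correction round, the total perturbation to any tree edge is a simple sum of distinct elements of $\Pi$, hence lies in $\Pi'$ --- exactly what Step~1 was designed to absorb. The large set $\Gamma$ is used for the tree edges and the small set $\Pi$ for the corrections, not the other way around.
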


We say that $F_e$ is the set of {\em forbidden flow values} for $e$.
If it becomes convenient to reverse the orientation of an edge $e$ we replace $F_e$ by $-F_e = \{-\gamma \mid \gamma \in F_e\}$.

\begin{proof}
By Lemma~\ref{lemma:subgroup} there exist subsets $\Pi, \Pi' \subseteq \Gamma$ such that $\Pi$ is closed under inverses and  has $|\Pi| = 2k$ and $\Pi'$ is the simple sum of $\Pi$ and has $|\Pi'| \leq 4k^2$.

It suffices to prove Theorem~\ref{thm:3-edge-connminusvertex} in the case where $G$ is cubic. For if $u$ is a vertex of degree $d>3$, then we replace $u$ by a cycle of length $d$ such that all vertices in that cycle have degree 3 in the resulting graph. (If $u$ is a cutvertex, we can make sure that no two edges of the cycle form a 2-edge-cut.) So assume that $G$ is cubic and 3-edge-connected. 
The case $k=1$ follows from Theorem~\ref{thm:lower} and Theorem~\ref{thm:jaeger}, so we may assume that $k>1$.

By Theorem~\ref{thm:specialspanningtree}, $H = G - v$ has a spanning tree $T$ in $H$ such that the contraction of the edges of $H$ which are not in $T$ yields a 2-constructible graph.
We colour the edges of $T$ red and colour the edges in $H-T$ blue. Let $E_b$ be the set of blue edges.

As $H/E_b$ is 2-constructible there exists a sequence $H_0, H_1, \ldots, H_t$ of graphs such that $H_0$ is the empty graph on $|V(H/E_b)|$ vertices, $H_t = H/E_b$, and each $H_i$ is obtained from $H_{i-1}$ by adding two red edges (which we denote by $e_i$ and $e_i'$) between two disjoint connected components of $H_{i-1}$.

We first describe informally the method, which is similar to the method in \cite{jlpt}. We give $H$ flow in two steps. In the first step we give all red edges flow values which are non-forbidden and which remain to be non-forbidden after Step 2. Step 1 also affects the blue edges but this is not important. In Step 2 we send flow through the blue edges such that the flow values of the blue edges are non-forbidden. There is already flow through the blue edges after Step 1. The additional flow values (added in Step 2) in the blue edges will be in $\Pi$. This will then affect the red edges in such a way that the additional flow through a red edge will be in $\Pi'$. In Step 1, the flow is chosen such that an additional flow value in $\Pi'$ will keep the flow value in the red edge non-forbidden.

We now argue formally.

\vspace*{3mm}

\textbf{Step 1:}
Consider a cycle $C_t'$ in $H_t=H/E_b$ through $e_t$ and $e_t'$ and let $C_t$ be a cycle in $H$ containg the edges of $C_t'$ and no other red edges.
If the orientations of $e_t$ and $e_t'$ do not agree on $C_t$ then reverse the orientation of $e_t$ and replace $F_e$ by $-F_e$.
Let $F=F_{e_t} \cup F_{{e'}_t}$.
We pick flow $\gamma_t$ to send through $C_t$ such that:
\begin{align}
(\gamma_t + \Pi') \cap F = \emptyset.
\end{align}
Since $\Pi'$ has size at most $4k^2$, $F$ has size at most $2k$, and $\gamma_t$ can be chosen in more than $8k^3$ ways, this is indeed possible.
Modify the set of forbidden flow values in each red edge $e$ of $C_t - e_t - e'_t$ such that the new set of forbidden flow values is $F_e - \gamma_t = \{\alpha - \gamma_t \mid \alpha \in F_t\}$ if the orientation of $e$ agrees with that of $e_t$, and $F_e + \gamma_t = \{\alpha + \gamma_t \mid \alpha \in F_t\}$ if not. We call this the {\em first iteration} of Step 1.

Next, we consider a cycle $C'_{t-1}$ in $H/E_b-e_t-e'_t$ through $e_{t-1}$ and $e_{t-1}'$ and let $C_{t-1}$ be a cycle in $H-e_t-e'_t$ containing the edges of $C'_{t-1}$ and no other red edges. As above we find an appropriate flow $\gamma_{t-1}$ to send though $C_{t-1}$. Note that $e_t$ and $e_t'$ are not in $C_{t-1}$ (or any other $C_i$ for $1\leq i \leq t-1$) by the construction of $H/E_b$, so the flow in $e_t,e_t'$ will not be changed in Step 1. We call this the {\em second iteration} of Step 1.
We repeat this argument for $e_{t-2}$ and $e_{t-2}'$ and then $e_{t-3}$ and $e_{t-3}'$, etc., until all red edges have received a flow value. Once $e_i,e_i'$ have received a flow value in the $(t+1-i)$'th iteration of Step 1, that flow value will not be further changed in Step 1.

\vspace*{3mm}

\textbf{Step 2:}
If all blue edges have a non-forbidden flow after Step 1, the proof is complete and there will be no Step 2. So consider a blue edge $e$ which has a forbidden flow value $f'(e)$, say, after Step 1. 
Since $|\Pi| = 2k$ and $F_e$ has at most $k$ elements, there exists a $\gamma \in \Pi$ such that $f'(e)+\gamma$ and $f'(e)-\gamma$ are both non-forbidden, that is, they are both in $\Gamma \setminus F_e$. We say that $\gamma$ is {\em good for $e$}.
Let $E_{\gamma}$ be the set of all edges which currently have forbidden flow values and for which $\gamma$ is good.
For each edge in $E_{\gamma}$, let $C_e$ be the unique cycle in $T+e$.
Form the symmetric difference $H_{\gamma}$ of $C_e$ taken over all edges $e$ in $E_{\gamma}$.
Then $H_{\gamma}$ is an even graph (that is, a graph where each component is Eulerian), and hence $H_{\gamma}$ has a flow using only $\gamma$ and $-\gamma$. We add this flow to the flow obtained after Step 1, and now all edges in $E_{\gamma}$ have non-forbidden flow values.

Repeat this step as long as there are blue edges with forbidden flow values.

\vspace*{3mm}

Consider now a red edge $e$ after Step 2. Let $f'(e)$ be its flow value after Step 1. In Step 2 we add some elements in $\Pi$ to $f'(e)$. Thus the final flow of $e$ is of the form $f'(e)+ \gamma'$ where $\gamma'$ is a simple sum of elements from $\Pi$, that is $\gamma' \in \Pi'$. By the choice of $f'(e)$ it follows that the final flow value $f'(e) + \gamma' \not \in F_e$, as required. (In Step 1 we modified the forbidden flow values in the red edges. Here in Step 2, $F_e$ denotes the original forbidden flow values.)
\end{proof}

Theorem~\ref{thm:3-edge-conn} below follows from Theorem~\ref{thm:3-edge-connminusvertex} by adding a vertex of degree 3 to $G$ which may be removed again.

\begin{corollary} \label{thm:3-edge-conn}
Let $G$ be a 3-edge-connected graph.
Let $k \in \N$, and let $\Gamma$ be any Abelian group of order $|\Gamma| > 8k^3$.
Given any orientation of $G$, if $G$ has at most $k$ forbidden flow values from $\Gamma$ in each edge, then there exists a flow $f: E(G) \to \Gamma$ such that $f(e)$ is not forbidden for any $e \in E(G)$.
\end{corollary}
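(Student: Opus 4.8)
The plan is to reduce the statement to \Cref{thm:3-edge-connminusvertex}, as indicated in the remark just before the corollary: one attaches a new degree-$3$ vertex to $G$, applies that theorem, and deletes the new vertex again. Assume first that $G$ has at least three vertices (the cases $|V(G)|\le 2$ are degenerate and will be disposed of separately). Choose three distinct vertices $u_1,u_2,u_3$ of $G$ and let $G'$ be the graph obtained from $G$ by adding one new vertex $v$ and three new edges $vu_1,vu_2,vu_3$; keep the given orientation on $E(G)$ and orient the new edges arbitrarily. Then $v$ has degree $3$ in $G'$ and $G'-v=G$ as oriented graphs, so a flow on $G'-v$ is literally the same thing as a flow on $G$ (the flow-conservation equations at the vertices of $G'-v$ are exactly those at the vertices of $G$).

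The only thing to verify is that $G'$ is $3$-edge-connected. If $S$ were an edge cut of $G'$ with $|S|\le 2$, separating $V(G')$ into non-empty parts $A\ni v$ and $B$, then $A\setminus\{v\}\ne\emptyset$ (otherwise $|S|=3$), and the edges of $G$ between $A\setminus\{v\}$ and $B$ would be precisely the edges of $S$ not incident with $v$; thus $G$ would have an edge cut of size at most $2$ separating the non-empty sets $A\setminus\{v\}$ and $B$, contradicting $3$-edge-connectivity of $G$. For $|V(G)|\le 2$: if $G$ is a single vertex the statement is vacuous, and if $G$ consists of two vertices joined by $m\ge 3$ parallel edges, then after orienting all edges the same way we only need values $x_1,\dots,x_m\in\Gamma$ with $x_i\notin F_{e_i}$ and $x_1+\dots+x_m=0$, which is possible because $|\Gamma|>8k^3>2k$ (fix all but two of the $x_i$ avoiding their forbidden sets, then pick the remaining two).

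Finally, apply \Cref{thm:3-edge-connminusvertex} to the $3$-edge-connected graph $G'$, its degree-$3$ vertex $v$, the graph $H:=G'-v=G$ equipped with the given orientation, the given integer $k$, the group $\Gamma$ (which has order $|\Gamma|>8k^3$), and the prescribed forbidden sets $F_e$ for $e\in E(G)$. This produces a flow $f:E(H)\to\Gamma$ with $f(e)\notin F_e$ for all $e\in E(H)=E(G)$, which, by the identification above, is exactly the desired flow on $G$.

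I do not expect any genuine difficulty: all the substance is in \Cref{thm:3-edge-connminusvertex}, and the augmentation argument is standard. The two points that warrant a line of care are the verification that adding the degree-$3$ vertex keeps the graph $3$-edge-connected and the (essentially trivial) treatment of graphs on at most two vertices.
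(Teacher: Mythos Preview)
Your proposal is correct and follows exactly the route the paper indicates: the paper's proof is the single sentence that the corollary follows from \Cref{thm:3-edge-connminusvertex} ``by adding a vertex of degree 3 to $G$ which may be removed again,'' and you have simply supplied the routine details (the $3$-edge-connectivity check for $G'$ and the degenerate cases $|V(G)|\le 2$).
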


\section{Flows in 2-edge-connected graphs with forbidden flow-values}

Now we can use Theorem~\ref{thm:3-edge-conn} to prove a similar statement about 2-edge-connected graphs.

\begin{theorem} \label{thm:2-edge-conn}
Let $G$ be a 2-edge-connected graph.
Let $k \in \N$, and let $\Gamma$ be an Abelian group of order $|\Gamma| > 8(k q(G))^3$.
Given any orientation of $G$, if $G$ has at most $k$ forbidden flow values from $\Gamma$ in each edge, then there exists a flow $f: E(G) \to \Gamma$ such that $f(e)$ is not forbidden for any $e \in E(G)$.
\end{theorem}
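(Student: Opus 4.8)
The plan is to reduce the $2$-edge-connected case to the $3$-edge-connected case already handled in Corollary~\ref{thm:3-edge-conn}, by contracting each maximal cycle-equivalence class (i.e. each maximal "series class" of edges) to a single edge. Concretely, let $G$ be $2$-edge-connected with the given orientation and forbidden sets $F_e$ of size at most $k$. Two distinct edges lying in a common $2$-edge-cut are forced to carry the same flow value (up to sign, depending on orientation) in every flow; so along a maximal cycle-equivalent class $\{e_1,\dots,e_m\}$ with $m \le q(G)$, after reorienting so that all the $e_i$ point "the same way" along the common cycles, a flow value $\gamma$ on that class is admissible only if $\gamma \notin F_{e_1} \cup \cdots \cup F_{e_m}$. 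Since $m \le q(G)$ and each $|F_{e_i}| \le k$, this union has size at most $k\,q(G)$. Replacing the whole class by a single edge $e^*$ with forbidden set $F_{e^*} := F_{e_1} \cup \cdots \cup F_{e_m}$ of size $\le k\,q(G)$, and doing this for every maximal class, produces a graph $G'$ that is $3$-edge-connected (every $2$-edge-cut has been collapsed) with at most $k' := k\,q(G)$ forbidden values per edge.

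Next I would check that $G'$ is genuinely $3$-edge-connected and has no loops: a loop in $G'$ would come from a class whose contraction identifies the two endpoints, which would mean the original class, together with nothing else, separates the graph — but then that class plus one more edge would already have formed a larger cut, or the graph would not be $2$-edge-connected; in any case one argues that collapsing maximal series classes in a $2$-edge-connected graph yields a simple $3$-edge-connected graph (or at worst one can delete any resulting loops, which carry no flow constraint). Then apply Corollary~\ref{thm:3-edge-conn} to $G'$ with the bound $|\Gamma| > 8(k')^3 = 8(k\,q(G))^3$, which holds by hypothesis: we obtain a flow $f': E(G') \to \Gamma$ with $f'(e^*) \notin F_{e^*}$ for every edge $e^*$ of $G'$.

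Finally, lift $f'$ back to $G$: along each contracted class $\{e_1,\dots,e_m\}$ set $f(e_i) = \pm f'(e^*)$ according to whether $e_i$'s orientation agrees with the chosen reference orientation of the class. This $f$ satisfies the flow conservation law at every vertex of $G$ — conservation at vertices internal to a series path is automatic since the flow is constant along the path, and conservation at the "branch" vertices is inherited from conservation in $G'$ — and by construction $f(e_i) \notin F_{e_i}$ for all $i$, using that $F_{e_i} \subseteq F_{e^*}$ and that $f'(e^*) \notin F_{e^*}$ forces $\pm f'(e^*) \notin \pm F_{e_i} \supseteq \pm$ (the reoriented forbidden set). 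So $f$ is the desired nowhere-forbidden flow on $G$.

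The main obstacle I expect is the bookkeeping around orientations and signs: one must be careful that reorienting edges within a series class (and correspondingly negating their forbidden sets) is done consistently, so that "$f'(e^*) \notin F_{e^*}$" really does translate into "$f(e_i) \notin F_{e_i}$" for each member of the class, and that the contraction of several disjoint maximal classes does not accidentally create multi-edges that reintroduce a $2$-edge-cut or a loop. Verifying that the contracted graph is $3$-edge-connected — i.e. that \emph{every} $2$-edge-cut of $G$ lies inside a single cycle-equivalence class, which is exactly the characterization given in the definition of cyclicity — is the one structural point that needs a clean argument, but it follows directly from the stated fact that two edges form a $2$-edge-cut iff they are cycle-equivalent.
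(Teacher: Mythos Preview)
Your overall strategy --- collapse each cycle-equivalence class to a single edge, collect the at most $kq(G)$ forbidden values on that edge, and invoke Corollary~\ref{thm:3-edge-conn} --- is exactly the strategy the paper uses. The gap is that you have identified cycle-equivalence classes with ``series classes'' (paths whose internal vertices have degree~2), and this identification is false in general, so your contraction step is not well-defined.

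For a concrete example, take two disjoint triangles on $\{a,b,c\}$ and $\{d,e,f\}$ and add the edges $ad$ and $be$. This graph is 2-edge-connected, and $\{ad,be\}$ is a 2-edge-cut, hence a cycle-equivalence class of size~2. But every vertex has degree~3; there is no series path here to suppress, and ``replacing $\{ad,be\}$ by a single edge $e^*$'' has no obvious meaning as a graph operation. Your later sentence ``conservation at vertices internal to a series path is automatic'' reveals the same hidden assumption. More generally, deleting a cycle-equivalence class can leave \emph{several} components with edges, not just one path's worth of isolated degree-2 vertices, so the naive contraction does not produce a 3-edge-connected graph to which Corollary~\ref{thm:3-edge-conn} applies.

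The paper fills precisely this hole: it builds an auxiliary graph $G'$ on the \emph{same edge set} as $G$, with the same cycles and the same $\Gamma$-flows, but in which every cycle-equivalence class \emph{does} form a directed path whose interior vertices have in- and out-degree~1. The construction repeatedly takes a class whose deletion leaves more than one nontrivial component, peels off one component $G_1$, and re-routes the two connecting paths so that they are concatenated into a single path; each step strictly increases the number of degree-2 vertices, so it terminates. Once this property holds, $G'$ is a subdivision of a genuinely 3-edge-connected graph $G''$, and your argument (union of forbidden sets, apply Corollary~\ref{thm:3-edge-conn}, lift back) goes through verbatim. So your proof is not wrong in spirit, but it is missing the one nontrivial structural step; the ``bookkeeping around orientations'' that you flagged is not the obstacle --- the existence of the graph $G'$ is.
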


\begin{proof}
We may assume that the orientation of $G$ is strongly connected by reversing the edges necessary and adjusting the forbidden sets in the edges accordingly. 
Then all edges in a cycle-equivalence class have the same direction in each cycle containing them. We form a new directed graph $G'$ on the same edge set as $G$ (but possibly with a different vertex set) such that

(i): the edge set of any cycle in $G$ is also the edge set of a cycle in $G'$ and vice versa (in particular $G$ and $G'$ have the same cycle-equivalence classes).

(ii): the orientations of all edges agree in any two cycles with the same edge set in $G$ and $G'$, respectively. More precisely: If $E$ is the edge set of a cycle in $G$ and hence also a cycle in $G'$, then we can choose a clockwise orientation of $C,C'$ such that an edge $e$ is directed clockwise in $C$ if and only if $e$ is directed clockwise in $C'$.

(iii): every $\Gamma$-flow in $G$ is also a $\Gamma$-flow in $G'$ and vice versa.

(iv): the edges of each cycle-equivalence class in $G'$ form a directed path such that each intermediate vertex has indegree 1 and outdegree 1.

\vspace{3mm}

Note that (iv) is equivalent with the following:

(v): If we delete the edges of a cycle-equivalence class in $G'$, then the resulting graph has precisely one component with edges.

\vspace{3mm}

If $G$ satisfies (iv), we put $G'=G$. Otherwise, there exists a cycle-equivalence class such that the deletion of its edges results in a graph with more than one component. Let $G_1$ be one component. Let $G_2$ consist of all other components containing edges together with those paths in the cycle-equivalence class that connect them. Then the edges in the cycle-equivalence class that are not in $G_1 \cup G_2$ form two directed paths $P_1$ with vertices $v_1,v_2, \ldots ,v_s$ and directed edges $e_1,e_2, \ldots e_{s-1}$ and
$P_2$ with vertices $u_1,u_2, \ldots ,u_t$ and directed edges  $e'_1,e'_2, \ldots e'_{t-1}$ where $v_1,u_t \in V(G_1)$ and  $u_1,v_s \in V(G_2)$. 
The two paths $P_1,P_2$ are disjoint except that possibly $v_1=u_t$ and possibly $u_1=v_s$. Only their endvertices are in $G_1 \cup G_2$. Now we form a new graph $H$ from $G_1 \cup G_2$ by first identifying $v_1,v_s$ and then adding a directed path with edges $e_1,e_2, \ldots e_{s-1},e'_1,e'_2, \ldots e'_{t-1}$ from $u_1$ to $u_t$. Then $H$ also satisfies (i),(ii),(iii). Moreover, $H$ has more vertices of indegree 1 and outdegree 1 than $G$. So, in a finite number of steps we obtain a graph satisfying (i),(ii),(iii),(iv).

It follows that $G'$ is a subdivision of a 3-edge-connected graph $G''$. An edge in $G''$ corresponds to a cycle-equivalence class in $G$ and is therefore subdivided into at most $q(G)$ edges, by the definition of $q(G)$.
Now we complete the proof by applying  Theorem~\ref{thm:3-edge-conn} to $G''$. As each edge in $G$ has at most $k$ forbidden flow values, each edge in $G''$ has at most $kq(G)$ forbidden flow values.
\end{proof}

Theorem~\ref{thm:2-edge-conn} for $k=1$ combined with Theorem~\ref{thm:lower} implies Theorem~\ref{thm:2-edge-conn-main} as well as the upper bound $g(k) \leq 8k^3+1$.

\section{Group coloring}

Jaeger et al. \cite{jlpt} define group colourability as follows.

\begin{definition}
Let $\Gamma$ be an Abelian group. The graph $G$ is said to be {\em $\Gamma$-colorable} if the following holds: Given some orientation $D$ of $G$ and any function $\varphi: E(G) \to \Gamma$ there exists a vertex coloring $c: V(G) \to \Gamma$ such that
$c(w) - c(u) \neq \varphi(uw)$
for each $uw \in E(D)$.
\end{definition}

We say that $\varphi$ \textit{allows} $c$.

\vspace*{3mm}

A graph is {\em $d$-degenerate} if every subgraph contains a vertex of degree at most $d$.
The {\em coloring number}  $Col(G)$ is the largest number such that $G$ is $(d-1)$-degenerate. Equivalently, $Col(G) -1$ is the maximum minimum degree where the maximum is taken over all subgraphs of $G$.
Clearly

\begin{align}
\chi(G) \leq \chi_{wg}(G) \leq \chi_g(G) \leq Col(G).
\end{align}

Also

\begin{align}
 \chi_l(G) \leq Col(G),
\end{align}

where $\chi_l(G)$ is the list-chromatic number.

\vspace{3mm}

Now let $G$ be a graph with $\chi_{wg}(G) = k$, and let $\Gamma$ be an Abelian group such that $G$ is $\Gamma$-colorable.
Kr\'al' et al. \cite{kpv} proved that $k> \delta/2\ln(\delta)$ where $\delta$ is the minimum degree of $G$. (Kr\'al' et al. formulated their result as one about the group chromatic number but the proof works for the weak group chromatic number as well.)
As this holds for every subgraph of $G$ it also holds for a subgraph of minimum degree $\delta = Col(G) -1$.
Hence 

\begin{align}
\chi_{wg}(G) = k > \frac{\delta}{2 \ln \delta}
= \frac{Col(G) - 1}{2 \ln(Col(G)-1)} 
\geq \frac{\chi_g(G) - 1}{2 \ln(\chi_g(G)-1)}.
\end{align}

This implies, for each natural number $k$, the upper bound in the following:

\begin{align}
(2-o(1)) k < h(k) < (2+o(1))k \ln(k).
\end{align}

Since the graphs $G_{q,k}$ are planar, their dual graphs establish the lower bound (for infinitely many primes $k$) by the proof of Theorem~\ref{thm:prime}.

The group chromatic number and weak group chromatic number have some similarity to the list-chromatic number. Indeed, the proof of the 5-list-color theorem for planar graphs \cite{t1} translates, word for word, to the result that every planar graph has group chromatic number at most 5. And the proof of the 3-list-color theorem for planar graphs of girth at least 5 \cite{t3} translates to the result that every planar graph has group chromatic number at most 3. Only a minor detail in the proof needs additional explanation, see \cite{rty}.

\vspace{3mm}

The following conjecture was made by Kr\'al' et al. in \cite{kpv} and, according to \cite{kpv}, independently by Margit Voigt.

\begin{conjecture} \label{prob:1}
For every graph $G$, $\chi_l(G) \leq \chi_g(G)$.
\end{conjecture}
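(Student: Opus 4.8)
This is a well-known open conjecture, so the following is a plan of attack rather than a route I expect to complete. Unwinding the definitions, $\chi_g(G)-1$ is the largest order of an Abelian group $\Gamma$ for which $G$ fails to be $\Gamma$-colorable (finite since $\chi_g(G)\le Col(G)$), and, because choosability is monotone, $\chi_l(G)-1$ is the largest $k$ for which some list assignment with all lists of size $k$ has no proper coloring. So it suffices to prove: whenever $G$ has a list assignment $L$ with $|L(v)|=k$ for all $v$ and no proper $L$-coloring, there exist an Abelian group $\Gamma$ with $|\Gamma|\ge k$, an orientation $D$ of $G$, and $\varphi\colon E(D)\to\Gamma$ for which no $c\colon V(G)\to\Gamma$ satisfies $c(w)-c(u)\neq\varphi(uw)$ for all $uw\in E(D)$. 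The delicate point, which rules out the most obvious strategies, is that this must be done on $G$ itself: attaching ``list-enforcing'' gadgets is not allowed, since passing to a supergraph only makes group-colorability harder and says nothing about $G$.

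I would pursue two complementary routes. The first is a direct encoding: fix a prime $p$ exceeding $\bigl|\bigcup_v L(v)\bigr|$, take $\Gamma=\Z_p$ (so $|\Gamma|\ge k$ automatically), identify the colors used by $L$ with residues, and try to choose $\varphi$ so that every $\Z_p$-coloring avoiding $\varphi$, after subtracting a suitable global constant, lands inside the lists $L(v)$ and hence cannot exist. The second is algebraic. It is classical that $\chi_l(G)\le AT(G)$, the Alon--Tarsi number defined via the graph polynomial $P_G=\prod_{uw\in E}(x_u-x_w)$; and a $\Z_p$-coloring avoiding $\varphi$ is precisely a point where the shifted polynomial $P_{G,\varphi}=\prod_{uw\in E}(x_u-x_w-\varphi(uw))$ is nonzero, and the top-degree homogeneous part of $P_{G,\varphi}$ is $P_G$. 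I would therefore try to prove a converse-type statement: if $P_G$ has no monomial with all exponents at most $k-1$ and nonzero coefficient, then some shift $\varphi$ forces $P_{G,\varphi}$ to vanish on all of $\Z_p^{V(G)}$. That would give $AT(G)\le\chi_g(G)$, and together with $\chi_l(G)\le AT(G)$ it would settle the conjecture.

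The main obstacle is a genuine structural mismatch between the two notions. The set of valid group-colorings is always invariant under adding a fixed group element to every vertex color, whereas for an arbitrary list assignment the set of valid colorings has no symmetry whatsoever; since gadgeting is forbidden, no single pair $(\Gamma,\varphi)$ on $G$ can literally simulate an arbitrary extremal $L$. One therefore needs an \emph{indirect} certificate of non-colorability that transfers between the two settings, and the Alon--Tarsi/Combinatorial Nullstellensatz route seems the most promising candidate. Its bottleneck, which I expect to be the crux, is exactly a converse to the Nullstellensatz for these shifted polynomials: understanding which $\varphi$ push $P_{G,\varphi}$ into the ideal generated by the $x_v^p-x_v$, uniformly enough to match a worst-case list assignment. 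This is presumably why the conjecture is still open; all the partial evidence is positive --- both parameters lie between $\chi(G)$ and $Col(G)$, $\chi_l$ also lies below $AT$, and no counterexample is known --- but none of it supplies the required transfer principle.
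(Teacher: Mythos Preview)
You have correctly identified the situation: this statement is an open conjecture, and the paper does not prove it either. The paper simply records it as a conjecture due to Kr\'al', Pangr\'ac, and Voss (and independently Voigt), and then proposes the analogous conjecture for the weak group chromatic number. There is therefore no ``paper's own proof'' to compare your proposal against.

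Your discussion is an honest and reasonable sketch of possible lines of attack. The observation that both $\chi_l$ and $\chi_g$ are sandwiched between $\chi$ and $Col$, and that the Alon--Tarsi number $AT(G)$ sits above $\chi_l$, is standard and correct; the idea of trying to show $AT(G)\le\chi_g(G)$ via a ``converse Nullstellensatz'' for the shifted polynomials $P_{G,\varphi}$ is a natural thought. You are also right to flag the structural obstacle: group-colorings are invariant under global translation while list-colorings are not, so a single $(\Gamma,\varphi)$ on $G$ cannot literally encode an arbitrary bad list assignment, and one needs an indirect transfer principle. None of this is wrong, but none of it closes the gap either --- which is exactly why the problem remains open. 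In short, your proposal is appropriate as a research plan, not as a proof, and the paper makes no stronger claim.
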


We propose the analogous conjecture for the weak group chromatic number.

\begin{conjecture} \label{prob:2}
For every graph $G$, $\chi_l(G) \leq \chi_{wg}(G)$.
\end{conjecture}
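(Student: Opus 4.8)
The plan is in two stages: first record that a weak, ``off by a logarithm'' version of Conjecture~\ref{prob:2} is already in hand, and then try to sharpen it by passing to covers of $G$. For the weak version, recall $\chi_l(G)\le Col(G)$ together with the bound of Kr\'al' et al.\ reproduced above, $\chi_{wg}(G) > \delta/(2\ln\delta)$ with $\delta = Col(G)-1$; solving for $Col(G)$ gives $\chi_l(G)\le Col(G) = O\big(\chi_{wg}(G)\ln\chi_{wg}(G)\big)$, i.e.\ the $h(k)$ bound up to the logarithmic factor but not the conjectured one. To reach $\chi_l(G)\le\chi_{wg}(G)$ exactly one has to exploit $\Gamma$-colourability itself rather than just the degeneracy it forces, and the natural vehicle for that is the language of covers (correspondences).

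Concretely, reduce to the case that $G$ is connected and that we are handed a list assignment $L$ with $|L(v)| = \chi_{wg}(G) =: k$ for every $v$; it suffices to produce a proper $L$-colouring. Fix an Abelian group $\Gamma$ with $|\Gamma| = k$ for which $G$ is $\Gamma$-colourable. A proper $L$-colouring is an independent transversal of the cover with fibre $L(v)$ over $v$ and matching $\{(a,a): a\in L(u)\cap L(v)\}$ over $uv$; on the other hand $\Gamma$-colourability says that for every orientation of $G$ and every $\varphi\colon E(G)\to\Gamma$ the cover with fibre $\Gamma$ over each vertex and matching $\{(g,g+\varphi(uv)): g\in\Gamma\}$ over each oriented edge $uv$ has an independent transversal. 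So it is enough to find bijections $\sigma_v\colon L(v)\to\Gamma$, an orientation of $G$, and $\varphi\colon E(G)\to\Gamma$ such that, after relabelling by the $\sigma_v$, the list cover embeds into the group cover, i.e.\ for each edge $uv$ oriented $u\to v$ one has $\sigma_v(a)-\sigma_u(a)=\varphi(uv)$ for all $a\in L(u)\cap L(v)$. A proper group colouring $c$ then yields the proper $L$-colouring $v\mapsto\sigma_v^{-1}(c(v))$, since the list constraints are a sub-collection of the (possibly stronger) group constraints.

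The bijections can be built along a spanning tree $T$: root $T$, choose $\sigma$ at the root arbitrarily, and across each tree edge $uv$ pick any translation $\varphi(uv)$, let this force $\sigma_v$ on $L(u)\cap L(v)$, and extend $\sigma_v$ arbitrarily to a bijection on the remaining colours; this is always possible because $|L(u)\cap L(v)|\le|\Gamma|$. The trouble is the non-tree edges: for a non-tree edge $uv$ with $|L(u)\cap L(v)|\ge 2$ the requirement that $\sigma_u$ and $\sigma_v$ differ by a single translation on $L(u)\cap L(v)$ is a genuine constraint, and around the fundamental cycles these constraints interact and may fail to be simultaneously satisfiable for a general graph. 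The whole content of the conjecture must be that $\Gamma$-colourability restricts the cyclic structure of $G$ enough to guarantee a consistent choice of the $\sigma_v$ — one would hope to extract this either through a forbidden-flow-value style characterisation, or by leveraging the fact that a proper group colouring exists for \emph{every} $\varphi$, not just one. I expect this cyclic-consistency step to be the real obstacle, and the reason the statement is only a conjecture.

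If a general argument remains out of reach, I would first nail down small cases and symmetric families: graphs with $\chi_{wg}(G)\le 3$ (the case $\chi_{wg}(G)=2$ forces $G$ to be a forest, where the bound is immediate), bipartite or planar $G$, and graphs whose $\Gamma$-colourability comes from a Cayley-type or highly symmetric structure, where the cyclic consistency conditions can be checked directly. Any such progress would also bear on Conjecture~\ref{prob:1}, since $\chi_{wg}(G)\le\chi_g(G)$ makes Conjecture~\ref{prob:2} the stronger of the two.
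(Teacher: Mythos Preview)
The statement you are attempting to prove is \Cref{prob:2}, which the paper states as an open \emph{conjecture}; there is no proof in the paper to compare your proposal against. You recognise this yourself in the final paragraphs, so what you have written is a research programme rather than a proof, and it should be assessed as such.

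The ``off by a logarithm'' bound you extract in the first paragraph is correct and is exactly the content of the paper's inequality $\chi_{wg}(G) > (Col(G)-1)/\bigl(2\ln(Col(G)-1)\bigr)$ combined with $\chi_l(G)\le Col(G)$. Your observation that \Cref{prob:2} implies \Cref{prob:1} (since $\chi_{wg}\le\chi_g$) is also correct and worth recording.

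The cover embedding strategy is a natural framing, but the obstacle you isolate is real and, as stated, decisive. Requiring $\sigma_v(a)-\sigma_u(a)$ to be constant on $L(u)\cap L(v)$ for every edge is equivalent to asking that the list cover factor through a $\Gamma$-cover, and there is no reason to expect this for an arbitrary list assignment on an arbitrary $\Gamma$-colourable graph: the constraints around a fundamental cycle force an algebraic compatibility among the $\sigma_v$ that the lists simply need not possess. In other words, your reduction replaces the conjecture by a strictly stronger structural statement about list assignments, one that is almost certainly false in general. A genuine attack would have to use the hypothesis ``$G$ is $\Gamma$-colourable for \emph{every} $\varphi$'' in a way that does not try to match a single $\varphi$ to the given lists---for instance via a counting or polynomial-method argument over all $\varphi$ simultaneously---rather than via a one-shot embedding. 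As it stands, the proposal correctly locates the difficulty but does not advance beyond it, which is consistent with the problem's status as a conjecture.
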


In \cite{jlpt} it is shown that every graph with two edge-disjoint spanning trees is $\Gamma$-connected for every Abelian group $\Gamma$ of order at least 4. This implies that $g(3)=3$ as pointed out in \cite{li}. 
For, if $G$ is $\Z_3$-connected we let $\beta(v)=1-d(v)$ (where $d(v)$ denotes the degree of $v$) for every vertex $v$, except possibly one, in Definition~\ref{def:delta}. 
We may choose the resulting $f$ such that $f(e)=1$ for every edge $e$, by reversing the direction of those edges having flow value 2. 
This gives an orientation of the edges such that all vertices, except possibly one, have outdegree $2 \pmod{3}$ and hence outdegree at least 2. Thus $G$ has at least $2|V(G)|-2$ edges. 
As this also holds for every graph obtained from $G$ by identifying vertices (and removing the loops that may arise), $G$ has two edge-disjoint spanning trees, by a fundamental result of Edmonds \cite{e}, Nash-Williams \cite{n} and Tutte \cite{t}.

Note that the dual statement does not hold: A $\Z_3$-colorable graph is not necessarily the union of two spanning trees. For example is $K_{3,5}$ $\Z_3$-colorable (see e.g. \cite{la}) but contains 8 vertices and 15 edges, thus it has too many edges to be the union of two spanning trees. 

\begin{conjecture} \label{prob:4}
$h(3)=3$.
\end{conjecture}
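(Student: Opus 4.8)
The plan is to reduce Conjecture~\ref{prob:4} to finitely many cases and then attack each by a minimal-counterexample argument. Since $h(3)\le 5$ is proved here and the requirement for $|\Gamma'|=3$ is vacuous ($\Z_3$ being the unique group of order $3$), it suffices to show that every $\Z_3$-colorable graph is both $\Z_4$-colorable and $\Z_2\times\Z_2$-colorable. Equivalently, since one checks that $\chi_{wg}(G)\le 3$ holds exactly when $G$ is $\Z_3$-colorable (a forest or an edgeless graph is $\Z_3$-colorable, and $\Z_2$-colorability is equivalent to being a forest), the goal is to prove $\chi_g(G)\le 4$ whenever $\chi_{wg}(G)\le 3$.

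Fix a target group $\Gamma'$ of order $4$ and let $G$ be a counterexample with fewest vertices and then fewest edges: $G$ is $\Z_3$-colorable but not $\Gamma'$-colorable. The usual reductions apply. Colorability passes to subgraphs, so every proper subgraph of $G$ is $\Z_3$-colorable and hence $\Gamma'$-colorable by minimality; since a $\Gamma'$-coloring may be shifted by a constant element of $\Gamma'$, $G$ is $2$-connected; and since a vertex of degree at most $3$ sees at most $3<|\Gamma'|$ forbidden colors and can be colored last, $\delta(G)\ge 4$. On the other hand $G$ is $\Z_3$-colorable, so $\chi(G)\le 3$, and the Kr\'al'--Pangr\'ac--Voss bound forces $Col(G)-1$, hence $\delta(G)$, to be at most an absolute constant. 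It is also worth extracting sharper local restrictions from $\Z_3$-colorability: for instance one can check that $K_{3,6}$ is not $\Z_3$-colorable (color the side of size $3$; each vertex of the other side then has three forbidden values and has no admissible color precisely when these three values are distinct), while $K_{3,5}$ is, so $G$ contains no $K_{3,6}$.

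The heart of the matter, and the main obstacle, is to produce a \emph{reducible configuration}: a bounded subgraph $H$ of $G$ such that every $\Gamma'$-coloring of $G-H$ extends over $H$, using the extra freedom that $\Z_3$-colorability of $G$ provides. The difficulty is structural: there is no homomorphism between $\Z_3$ and a group of order $4$, so a $\Z_3$-coloring cannot be transported to $\Gamma'$; moreover the clean route available for $g(3)=3$ is unavailable here, since a $\Z_3$-colorable graph need not decompose into two spanning trees ($K_{3,5}$ again). Thus $\Z_3$-colorability must be exploited purely as combinatorial data --- forbidden local patterns such as $K_{3,6}$, constraints on the neighborhoods of vertices of degree $4,5,\dots$, and counting restrictions --- and this data must be strong enough to eliminate every configuration that can occur up to the maximum minimum degree permitted by the Kr\'al'--Pangr\'ac--Voss bound. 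I expect a complete proof to require either a genuine structural description of $\Z_3$-colorable graphs, or a sharpening of the degree bound specifically for groups of order $3$ down to $\delta(G)\le 3$; the latter would already finish the proof, since a $3$-degenerate graph has $Col\le 4$ and hence $\chi_g\le 4$. It is also plausible that $\Z_4$ and $\Z_2\times\Z_2$ need separate treatment, as every nonzero element of the latter has order $2$.

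As a first step I would verify the conjecture for the classes in which $\Z_3$-colorability is best understood --- planar graphs, and graphs of bounded tree-width or bounded degeneracy --- in the hope that the reducible configurations found there indicate how to proceed in general.
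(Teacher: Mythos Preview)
This statement is a \emph{conjecture} in the paper, not a theorem; the paper does not prove it, and to the authors' knowledge it is open. What the paper does establish is that $h(3)\le 5$ (via Propositions~\ref{prop:1} and~\ref{prop:2}) and that Conjecture~\ref{prob:4} is equivalent to Conjecture~\ref{prob:5}, namely that every $\Z_3$-colorable graph is both $\Z_4$- and $\Z_2\times\Z_2$-colorable.

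Your reduction to the two groups of order~$4$ is exactly this equivalence, so on that point you and the paper agree. Your observation that $3$-degeneracy of $\Z_3$-colorable graphs would settle the conjecture is also something the paper raises explicitly (just after Proposition~\ref{prop:1}). But beyond these reductions you do not actually prove anything: you set up a minimal counterexample, note $\delta(G)\ge 4$, invoke the degree bound (the paper's Proposition~\ref{prop:1} gives the sharper $\delta(G)\le 5$ directly), and then candidly state that the ``heart of the matter'' --- finding reducible configurations --- remains. Your final paragraphs are a research programme, not an argument. That is an honest assessment of where the problem stands, but it is not a proof, and there is nothing here for the paper's nonexistent proof to be compared against.

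One small correction: your claim that $K_{3,6}$ is not $\Z_3$-colorable needs justification beyond ``each vertex of the other side then has three forbidden values''; those three values are distinct only for \emph{some} colorings of the $3$-side, and you must exhibit a $\varphi$ for which every coloring of the $3$-side leaves at least one vertex on the $6$-side blocked. This is a finite check, but it is not the one-line argument you wrote.
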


Conecture~\ref{prob:4} clearly implies Conjecture~\ref{prob:5} below.

\begin{conjecture} \label{prob:5}
If $G$ is $\Z_3$-colorable, then $G$ is $\Z_4$-colorable and also $\Z_2 \times \Z_2$-colorable.
\end{conjecture}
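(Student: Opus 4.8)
The plan is to reduce the conjecture to a minimal‑counterexample statement and then to attack the one genuinely hard point: converting a $\Z_3$-coloring into a coloring over a group of order $4$. Since $h(3)\le 5$ is already known, the only groups that remain are the two of order $4$, namely $\Z_4$ and $\Z_2\times\Z_2$, and it suffices to treat each of them in turn. For every $\Gamma\in\{\Z_3,\Z_4,\Z_2\times\Z_2\}$, $\Gamma$-colorability is monotone under subgraphs (restrict a colouring) and is preserved under gluing two graphs at a single vertex (fix the colour of the shared vertex, colour the two sides, and translate one of the colourings by a constant so that they agree there; a translation changes no difference $c(w)-c(u)$). Hence a vertex‑minimal counterexample $G$ — say $G$ is $\Z_3$-colorable but not $\Z_4$-colorable, with $|V(G)|$ smallest — is $2$-connected and has minimum degree at least $4$: a vertex $v$ of degree at most $3$ could be removed, $G-v$ coloured by minimality for the restricted forbidding function, and $c(v)$ chosen to avoid the at most $3$ resulting forbidden values in $\Z_4$. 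The reduction is identical with $\Z_2\times\Z_2$ in place of $\Z_4$.

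The crux is therefore to show that \emph{a graph of minimum degree at least $4$ is not $\Z_3$-colorable} — equivalently, that every $\Z_3$-colorable graph has a vertex of degree at most $3$, so that, by the monotonicity above, every $\Z_3$-colorable graph is $3$-degenerate; this would give $\chi_g(G)\le Col(G)\le 4$ and hence $\Gamma$-colorability for all $|\Gamma|\ge 4$, i.e. $h(3)\le 4$, in particular settling Conjecture~\ref{prob:5}. Concretely one must produce, for an arbitrary $G$ with $\delta(G)\ge 4$, an orientation $D$ and a function $\varphi\colon E(G)\to\Z_3$ such that every $c\colon V(G)\to\Z_3$ has an edge $uw\in E(D)$ with $c(w)-c(u)=\varphi(uw)$. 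At any one vertex $v$ of degree $\ge 4$ the value $c(v)$ must avoid at least four elements of the form $c(u_i)+\varphi(u_iv)$ (over in‑edges) or $c(w_j)-\varphi(vw_j)$ (over out‑edges) of the $3$-element group, so locally the constraints are already unsatisfiable; the difficulty is that these elements depend on the colours of the neighbours of $v$, so one needs a single $\varphi$ that over‑constrains \emph{some} vertex whatever $c$ is. A first‑moment computation — pick $\varphi$ uniformly at random, a fixed $c$ survives with probability $(2/3)^{|E(G)|}$, and $3^{|V(G)|}(2/3)^{|E(G)|}<1$ once $|E(G)|>\tfrac{\ln 3}{\ln(3/2)}\,|V(G)|$ — already rules out all colourings when $\delta(G)\ge 6$; the point is to push this down to $\delta(G)\ge 4$, and any argument that does so must genuinely use that \emph{no} vertex has small degree, since $K_{3,5}$ is $\Z_3$-colorable and has a vertex of degree $5$, so a single high‑degree vertex is never enough.

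The main obstacle is exactly this last step, which is strictly stronger than $h(3)\le 5$. I would try it either through a discharging analysis of the dense, $2$-connected, minimum‑degree‑$4$ minimal counterexample, building the blocking $\varphi$ along a suitable ear decomposition, or through a sharpening of the first‑moment bound (a second‑moment or entropy‑compression refinement, or a direct adaptation of the Kr\'al'--Pangr\'ac--Voss counting \cite{kpv} behind $h(3)\le 5$). If the degeneracy statement fails — the first test case being whether some $4$-regular $3$-chromatic graph, e.g. the octahedron $K_{2,2,2}$, is $\Z_3$-colorable — then one must argue directly for the two order‑$4$ groups, for instance by colouring in two $\Z_2$-layers: use the first layer for the edges of a fixed spanning forest and the second for the rest. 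This runs into the same wall, though: making the second layer work means realising a prescribed $\Z_2$-valued function on the non‑tree edges as endpoint sums, an arboricity‑$2$ condition; an edge‑partition of $G$ into two forests would immediately give $\Z_2\times\Z_2$- and $\Z_4$-colorability, but $\Z_3$-colorability does not imply it, as $K_{3,5}$ shows.
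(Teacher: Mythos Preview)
The statement you are addressing is a \emph{conjecture} in the paper, not a theorem: the paper does not prove it, and indeed the authors explicitly leave open precisely the step on which your entire strategy hinges. After your (correct) reduction to a $2$-connected minimal counterexample of minimum degree at least $4$, the crux you isolate is the claim that every $\Z_3$-colorable graph is $3$-degenerate. The paper proves only $5$-degeneracy (Proposition~\ref{prop:1}) by exactly the first-moment count you reproduce, and immediately afterwards states: ``We do not know if every $\Z_3$-colorable graph is even $3$-degenerate.'' So your ``key lemma'' is an open problem at least as hard as Conjecture~\ref{prob:5} itself; if it were known, it would in fact give the stronger $h(3)\le 4$.

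Your write-up is candid about this: you flag the step as ``the main obstacle,'' note that it is strictly stronger than the known $h(3)\le 5$, and then list possible attacks (discharging along an ear decomposition, a second-moment or entropy-compression sharpening, a two-layer $\Z_2$ colouring) without carrying any of them through, while correctly observing that the arboricity-$2$ route is blocked by $K_{3,5}$. That is a reasonable research outline, but it is not a proof: none of the proposed mechanisms is executed, and for the decisive degeneracy claim you do not even commit to whether it is true (you yourself raise $K_{2,2,2}$ as a potential counterexample). In short, there is a genuine gap at the heart of the argument, and it coincides with a question the paper explicitly records as open.
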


Since $g(3)=3$, the answer is affirmative for planar graphs. 
Jaeger et al. \cite{jlpt} asked if a graph is $\Z_4$-connected if and only if it is $\Z_2 \times \Z_2$-connected. Hu\v{s}ek et al. \cite{h} answered this in the negative for both implications. As some of the counterexamples are planar, their dual graphs show that $\Z_4$-colorability does not imply $\Z_2 \times \Z_2$-colorability. These graphs have multiple edges. Examples without multiple edges can be obtained using Hajos' construction. Such examples can even be made planar. Their dual graphs are therefore 3-edge-connected planar graphs that are $\Z_4$-connected, but not $\Z_2 \times \Z_2$-connected. For details, see \cite{lan}.
We do not know if $\Z_4$-colorability is implied by $\Z_2 \times \Z_2$-colorablility. 

\vspace*{3mm}

We conclude the paper by showing that Conjecture~\ref{prob:5} in fact is equivalent to Conjecture~\ref{prob:4}. This follows from the two propositions below.

\begin{proposition} \label{prop:1}
If $G$ is $\Z_3$-colorable, then $G$ is 5-degenerate, that is, $\chi_g(G) \leq Col(G) \leq 6$.
\end{proposition}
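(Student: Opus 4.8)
The plan is to reduce the statement to a short double-counting estimate. First I would record that $\Z_3$-colorability is inherited by subgraphs: if $H$ is a subgraph of $G$, then given any orientation of $H$ and any $\varphi\colon E(H)\to\Z_3$, extend both the orientation and $\varphi$ to $G$ arbitrarily, apply $\Z_3$-colorability of $G$ to obtain a coloring $c\colon V(G)\to\Z_3$ allowed by the extended $\varphi$, and restrict $c$ to $V(H)$. Hence it suffices to prove that every $\Z_3$-colorable graph $H$ satisfies $\delta(H)\le 5$; applying this to every subgraph of $G$ gives that $G$ is $5$-degenerate, i.e. $Col(G)\le 6$, and then $\chi_g(G)\le Col(G)\le 6$ by the inequality chain recorded above.

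So let $H$ be $\Z_3$-colorable and fix an arbitrary orientation $D$ of $H$ (reversing an arc $uw$ while replacing $\varphi(uw)$ by $-\varphi(uw)$ leaves the coloring condition unchanged, so the choice of orientation is immaterial). I would count the pairs $(\varphi,c)$ where $\varphi\colon E(H)\to\Z_3$, $c\colon V(H)\to\Z_3$, and $c$ is allowed by $\varphi$. Counting by $c$ first: for each of the $3^{|V(H)|}$ colorings $c$ and each arc $uw$, the value $\varphi(uw)$ may be any of the two elements of $\Z_3$ other than $c(w)-c(u)$, so exactly $2^{|E(H)|}$ functions $\varphi$ allow a given $c$; thus there are $3^{|V(H)|}2^{|E(H)|}$ such pairs. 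On the other hand, since $H$ is $\Z_3$-colorable, each of the $3^{|E(H)|}$ functions $\varphi$ allows at least one $c$, so there are at least $3^{|E(H)|}$ pairs. Combining,
\[
3^{|V(H)|}2^{|E(H)|}\ \ge\ 3^{|E(H)|},\qquad\text{i.e.}\qquad (3/2)^{|E(H)|}\ \le\ 3^{|V(H)|}.
\]

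From the last inequality I would deduce $|E(H)|<3|V(H)|$: if instead $|E(H)|\ge 3|V(H)|$ then $(3/2)^{|E(H)|}\ge(27/8)^{|V(H)|}>3^{|V(H)|}$, a contradiction. Consequently the average degree of $H$ is $2|E(H)|/|V(H)|<6$, so $\delta(H)\le 5$. As noted, this yields that $G$ is $5$-degenerate and hence $\chi_g(G)\le Col(G)\le 6$.

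I do not expect a real obstacle in this argument; the only points requiring care are the (easy) verification that $\Z_3$-colorability passes to subgraphs, the observation that it is enough to test a single orientation, and checking the numerics so that the bound comes out as degeneracy $5$ — precisely, that $27/8>3$ (equivalently $\tfrac{\ln 3}{\ln(3/2)}<3$), which is what forces $|E(H)|<3|V(H)|$ rather than a weaker bound.
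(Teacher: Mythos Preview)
Your proof is correct and takes essentially the same approach as the paper: the double count of pairs $(\varphi,c)$ is exactly the paper's bipartite degree count, yielding $3^{|V|}2^{|E|}\ge 3^{|E|}$ and hence $|E|<3|V|$. If anything, you are slightly more careful than the paper in explicitly recording that $\Z_3$-colorability is inherited by subgraphs, which is what upgrades the edge bound to $5$-degeneracy.
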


\begin{proof}
Consider the bipartite graph with vertices $A \cup B$ in which the vertices in $A$ corresponds to all possible $\Z_3$-colorings of the vertices in $G$, and the vertices in $B$ correspond to all possible $\Z_3$-colorings of the edges in $G$. 
We join two vertices by an edge if the edge-coloring allows the vertex-coloring.

Let $n = |V(G)|$ and $m = |E(G)|$.
Note that there $3^n$ vertices in $A$ and $3^m$ vertices in $B$. Furthermore, each vertex in $A$ has degree $2^m$.
As each edge-coloring will have at least one allowed vertex-coloring, all vertices in $B$ has degree at least 1. 
Thus $3^n \cdot 2^m \geq 3^m$. We conclude 
\begin{align}
m \leq n \cdot \frac{\log(3)}{\log(\frac{3}{2})} < 2.8 n < 3n,
\end{align} 
so $G$ is 5-degenerate. Thus, $G$ is $\Gamma$-colorable for every Abelian group $\Gamma$ of order $|\Gamma| \geq 6$.
\end{proof}

We do not know if every $\Z_3$-colorable graph is even 3-degenerate.

\begin{proposition} \label{prop:2}
If $G$ is $\Z_3$-colorable, then $G$ is $\Z_5$-colorable.
\end{proposition}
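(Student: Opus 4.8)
The plan is to argue by contradiction, with $G$ a counterexample minimizing $|V(G)|$ and then $|E(G)|$: fix an orientation $D$ of $G$ and a function $\varphi_0\colon E(G)\to\Z_5$ admitting no \emph{proper} coloring, where $c\colon V(G)\to\Z_5$ is proper if $c(w)-c(u)\neq\varphi_0(uw)$ for every arc $uw$ (recall that adding a constant to $c$ preserves properness). First I would record the routine reductions. Every subgraph of $G$ is again $\Z_3$-colorable, so by Proposition~\ref{prop:1} every subgraph of $G$ has fewer than $3$ times as many edges as vertices; hence $G$ is $5$-degenerate, and since $K_4$ is not even $3$-colorable it is not $\Z_3$-colorable, so $G$ is $K_4$-free. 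A vertex of degree at most $4$ could be deleted, the rest $\Z_5$-colored by minimality, and the coloring extended (at most four forbidden values at that vertex), so $\delta(G)\geq 5$. If $G$ had a vertex cut of size at most $1$, the (smaller) pieces could be $\Z_5$-colored by minimality and reassembled after shifting one coloring by a constant so the colors at the cut vertex agree, so $G$ is $2$-connected. Finally, two parallel edges with the same forbidden value impose the same constraint, so one may be deleted; thus $G$ has none.

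By $5$-degeneracy there is a vertex $v$ of degree exactly $5$; let $u_1,\dots,u_5$ be its neighbors (first assuming these are distinct; the multi-edge case is handled the same way, a neighbor joined to $v$ by $m$ edges with distinct forbidden values simply contributing an $m$-element set of forbidden colors at $v$). Orient each edge $vu_i$ away from $v$ and set $a_i=\varphi_0(vu_i)$; then a proper coloring $c$ of $G-v$ extends to $v$ precisely when $c(u_1)-a_1,\dots,c(u_5)-a_5$ are \emph{not} pairwise distinct. Since $G-v$ is $\Z_5$-colorable it has a proper coloring $c$, which, else we are done, fails to extend, i.e. $\{c(u_i)-a_i:1\leq i\leq 5\}=\Z_5$. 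The key point is a rigidity lemma, proved by a one-line computation in $\Z_5$: in any such non-extending coloring each $u_i$ is \emph{frozen}, meaning its current color is its only admissible color given the colors of its neighbors in $G-v$. Indeed, if $u_i$ could be recolored to $c'(u_i)\neq c(u_i)$ while staying proper on $G-v$, then $c'(u_i)-a_i$ is distinct from $c(u_i)-a_i$ and hence lies in $\{c(u_j)-a_j:j\neq i\}$, so the forbidden set at $v$ would cover only four elements of $\Z_5$, freeing a color for $v$ — a contradiction. In particular each $u_i$ has at least four neighbors in $G-v$ and the edges from $u_i$ into $G-v$ forbid exactly $\Z_5\setminus\{c(u_i)\}$ at $u_i$. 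A refinement of the same computation shows that if $u_i$ has exactly four neighbors in $G-v$ then each of those neighbors is itself frozen, for otherwise recoloring such a neighbor $w$ would replace the single value that the edge $u_iw$ forbids at $u_i$ by a value already forbidden there, shrinking $u_i$'s forbidden set to four elements and restoring the previous case. Edge-minimality gives one more lever: for every edge $e=xy$ the graph $G-e$ is $\Z_5$-colorable, and the resulting coloring must violate $e$ and only $e$, so $c(y)-c(x)=\varphi_0(e)$; applied with $e=vu_1$ this pins down an especially rigid configuration at $v$.

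It remains to convert ``$u_1,\dots,u_5$ are all frozen'' into a contradiction, and this is the step I expect to be the main obstacle, since being frozen forces only minimum degree at least $4$ in $G-v$, which is compatible with the sparsity supplied by Proposition~\ref{prop:1}. Here I would use that, because $G$ is $K_4$-free and $v$ is adjacent to all of $u_1,\dots,u_5$, the induced graph on $\{u_1,\dots,u_5\}$ is triangle-free, hence not $K_5$, hence some $u_i$ has one of its (at least four) neighbors in $G-v$ outside $\{v,u_1,\dots,u_5\}$, and that neighbor is frozen. One then has to trace the frozen structure outward — using $2$-connectivity to keep $G-v$ connected and the ``fewer than $3n$ edges'' bound on the relevant subgraphs to limit how many neighbors a frozen vertex can have — and recolor along a carefully chosen path so as to unfreeze one $u_i$, at which point the rigidity lemma frees a color at $v$; alternatively, the accumulated local rigidity yields a subgraph that is too dense, or an explicit subgraph that is not $\Z_3$-colorable, contradicting the hereditary $\Z_3$-colorability of $G$. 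Since Proposition~\ref{prop:1} already gives $\Z_\Gamma$-colorability for all $|\Gamma|\geq 6$, this completes the bound $h(3)\leq 5$.
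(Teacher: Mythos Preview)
Your argument is not a proof: you explicitly stop at the point where the five neighbors $u_1,\dots,u_5$ of a degree-$5$ vertex $v$ are all frozen, and then offer only a programme (``trace the frozen structure outward'', ``recolor along a carefully chosen path'', or ``find a subgraph that is too dense'') rather than an argument. None of these hints comes close to a contradiction: frozenness forces only that each $u_i$ has at least four neighbors in $G-v$, which is entirely compatible with the $m<3n$ bound from Proposition~\ref{prop:1}, and your iterated-freezing observation propagates only through degree-exactly-$5$ vertices, so it need not spread at all. The edge-minimality trick you invoke at the end (delete $vu_1$, color $G-vu_1$, observe the resulting coloring violates only $vu_1$) does not pin down anything new either, since that coloring may be completely unrelated to the one you started with. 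As written, this is an incomplete minimal-counterexample sketch, not a proof.

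The paper's proof is entirely different and fits in a few lines: it is a direct algebraic lift with no extremal analysis. View $\Z_5$ as $\{0,\pm 1,\pm 2\}$ and $\Z_3$ as $\{0,\pm 1\}$. Given $\varphi\colon E(D)\to\Z_5$, reduce each value modulo $3$ to obtain $\varphi'\colon E(D)\to\Z_3$. By hypothesis there is $c'\colon V(G)\to\{0,\pm 1\}$ with $c'(w)-c'(u)\neq\varphi'(uw)$ in $\Z_3$ for every arc $uw$; now read the \emph{same} values $c(v)=c'(v)\in\{0,\pm 1\}$ as elements of $\Z_5$. For any arc $uw$ the difference $c(w)-c(u)$ lies in $\{0,\pm 1,\pm 2\}\subset\Z_5$ and its residue modulo $3$ equals $c'(w)-c'(u)$, which differs from $\varphi'(uw)$; hence $c(w)-c(u)$ cannot equal $\varphi(uw)$, since equal elements of $\Z_5$ have equal residues. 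No degeneracy, minimality, or freezing is needed.
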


\begin{proof}
Let $G$ be a $\Z_3$-colorable graph, let $D$ be an orientation of $G$, and let $\varphi: E(D) \to \Z_5$ be given. We think of $\Z_5$ as the numbers $0,1,-1,2,-2$.
We define $\varphi': E(D) \to \Z_3$ by reducing these numbers modulo 3, where we think of $\Z_3$ as the numbers $0,1,-1$. More precisely, for each edge $e \in E(D)$,
\begin{align}
\varphi'(e) =
\begin{cases}
0 & \text{if } \varphi(e) = 0, \\
1 & \text{if } \varphi(e) = 1 \text{ or } \varphi(e) = -2, \\
-1 & \text{if } \varphi(e) = 2 \text{ or } \varphi(e) = -1.
\end{cases}
\end{align}
As $G$ is $\Z_3$-colorable, there exists a vertex-coloring $c': V(G) \to \Z_3$ such that $c'(v) - c'(u) \neq \varphi'(uv)$ for each $uv \in E(D)$.
Consider the vertex-coloring $c: V(G) \to \Z_5$ defined by $c(v) = c'(v) \pmod{5}$ for each vertex $v$.
We argue that $c$ is a proper coloring. Let $e = uv$ be a directed edge. 
If $c'(v) - c'(u) = 0$, then $c(v) - c(u) = 0$. As $\varphi'(e) \neq 0$ we get $\varphi(e) \neq 0$ so $c(v) - c(u) \neq \varphi(e)$.
If $c'(v) - c'(u) = 1$, then $c(v) - c(u) = 1$ or $c(v) - c(u) = -2$. As $\varphi'(e) \neq 1$ we get $\varphi(e) \neq 1$ and $\varphi(e) \neq -2$ so $c(v) - c(u) \neq \varphi(e)$.
If $c'(v) - c'(u) = -1$, then $c(v) - c(u) = 2$ or $c(v) - c(u) = -1$. As $\varphi'(e) \neq -1$ we get $\varphi(e) \neq 2$ and $\varphi(e) \neq -1$ so $c(v) - c(u) \neq \varphi(e)$.
\end{proof}

\end{document}